\theoremstyle{plain} 
\newtheorem{thm}{Theorem}
\newtheorem{prop}{Proposition}
\newtheorem{lem}{Lemma}
\newtheorem*{conj}{Claim}
\theoremstyle{definition}
\newtheorem{defn}{Definition}
\theoremstyle{remark} 
\newtheorem*{astep}{A--step}
\newtheorem*{pstep}{P--step}
\newtheorem*{cstep}{C--step}
\newcommand{\prob}{\mathsf{P}} 
\newcommand{\E}{\mathsf{E}}
\newcommand{\qrob}{\mathsf{Q}}
\newcommand{\Q}{\mathscr{Q}}
\newcommand{\lPi}{\underline{\Pi}}
\newcommand{\uPi}{\overline{\Pi}}
\newcommand{\ber}{{\sf Ber}}
\newcommand{\unif}{{\sf Unif}}
\newcommand{\nm}{{\sf N}}
\newcommand{\stt}{{\sf t}}
\newcommand{\I}{\mathscr{I}}
\newcommand{\RR}{\mathbb{R}}
\newcommand{\PP}{\mathbb{P}}
\newcommand{\YY}{\mathbb{Y}}
\newcommand{\Y}{\mathscr{Y}}
\renewcommand{\S}{\mathcal{S}}
\newcommand{\plint}{\mathscr{P}}
\newcommand{\eps}{\varepsilon}
\newcommand{\iid}{\overset{\text{\tiny iid}}{\,\sim\,}}
\title{Validity, consonant plausibility measures, and conformal prediction}
\author{Leonardo Cella\footnote{Department of Statistics, North Carolina State University; {\tt lolivei@ncsu.edu}, {\tt rgmarti3@ncsu.edu}} \quad and \quad Ryan Martin$^\dagger$}
\date{\today}
\begin{document}

\maketitle 

\begin{abstract}
Prediction of future observations is an important and challenging problem.  The two mainstream approaches for quantifying prediction uncertainty use prediction regions and predictive distributions, respectively, with the latter believed to be more informative because it can perform other prediction-related tasks.  The standard notion of validity, what we refer to here as {\em Type-1 validity}, focuses on coverage probability of prediction regions, while a notion of validity relevant to the other prediction-related tasks performed by predictive distributions is lacking. Here we present a new notion, called {\em Type-2 validity}, relevant to these other prediction tasks.  We establish connections between Type-2 validity and coherence properties, and show that imprecise probability considerations are required in order to achieve it.  We go on to show that both types of prediction validity can be achieved by interpreting the conformal prediction output as the contour function of a consonant plausibility measure.  We also offer an alternative characterization of conformal prediction, based on a new nonparametric inferential model construction, wherein the appearance of consonance is natural, and prove its validity.  


\smallskip

\emph{Keywords and phrases:} belief function; coherence; inferential model; possibility measure; random set. 
\end{abstract}

\section{Introduction}
\label{S:intro}

Reliable prediction of future observations under minimal model assumptions is an important and fundamental problem in statistics and machine learning.  There are two general approaches to quantifying uncertainty about the yet-to-be-observed value: one is based on {\em prediction regions} and the other is based on {\em predictive distributions}.  Since predictive distributions can be used to construct prediction regions, they are believed to be more informative.  However, the additional information that comes with a full predictive distribution usually requires model assumptions, which creates several practical challenges.  First, with model assumptions comes the risk of model misspecification biases that can negatively impact the quality of predictions; second, computation often requires Monte Carlo methods, and for sufficiently flexible predictive distributions based on, e.g., modern Bayesian nonparametric models \citep[e.g.,][]{hhmw2010, ghosal.vaart.book}, this can be non-trivial.  In a given application, the data analyst might be willing to take on these challenges, but not solely for the construction of prediction regions.  Indeed, there are non-model-based prediction regions with provably exact/conservative coverage probability, while those derived from a model-based predictive distribution, at best, attain the target coverage probability asymptotically.  Therefore, the motivation for developing a predictive distribution for quantifying uncertainty about the yet-to-be-observed value must be that there are other tasks, such as assigning probabilities (or degrees of belief) to general assertions about the to-be-observed value, which are of higher priority than the construction of prediction regions.  This begs the question: {\em how can the quality of a predictive distribution be assessed with respect to these other tasks?}  

In this paper, we develop a new notion of prediction validity that goes beyond the coverage probability of prediction regions, one that aims to assess the performance of a predictive distribution on these other relevant tasks.  If a predictive distribution is to be used to evaluate the probability of certain assertions about the future observation, then the magnitude of this probability will be used to draw inference about whether the assertion will be true.  So, roughly, to avoid making systematically erroneous predictions, it would be desirable to control the rate at which the predictive distribution assigns small probability to assertions that happen to be true.  Our definition of {\em Type-2 prediction validity} in Section~\ref{SS:strong}---compared to prediction region coverage probabilities, which we refer to as {\em Type-1 prediction validity} in Section~\ref{SS:weak}---makes this notion of avoiding systematically erroneous predictions precise.  There are a number of important imprecise probability-related consequences of Type-2 validity.  First, Type-2 validity rules out the possibility of a sure loss, forging an interesting connection between the classical behavioral interpretation of imprecise probabilities and the statistical properties of a procedure based on them. Second, we strengthen the validity property to what we call {\em strong Type-2 validity} in Section~\ref{SS:type3}, which provides control on erroneous predictions uniformly as opposed to pointwise in assertions about $Y_{n+1}$.  A characterization of strong Type-2 validity is given which, among other things, makes clear that additive/precise predictive probability distributions cannot be strongly Type-2 valid.  Therefore, to achieve strong Type-2 prediction validity, one must use an appropriate imprecise probability for prediction uncertainty quantification.  Compare this to the conclusions from the false confidence theorem \citep{Ryansatellite, MARTIN2019IJAR} in the context of statistical inference.


Fortunately, as we show in Section~\ref{S:consonance}, there is a simple imprecise prediction probability that can achieve both Type-1 and (strong) Type-2 prediction validity.  Specifically, we argue that, if the ``p-value'' or ``transducer'' that results from the conformal prediction procedure \citep[e.g.,][]{Vovk:2005, Shafer2007ATO, balasubramanian2014conformal}, is converted into a consonant plausibility function, then both notions of validity hold.  Thus, the present paper ties together two seemingly distinct threads in Glenn Shafer's distinguished research career, namely, imprecise probability/plausibility functions and conformal prediction.  This also shows that this new notion of prediction validity requires no new technical machinery, only a reinterpretation of the conformal transducer as a plausibility contour and adoption of the (consonant) plausibility or possibility calculus for uncertainty quantification.  Further desirable consequences of this new perspective on conformal prediction are that plausibility functions are coherent in the sense of \citet{walley1991} and \citet{lower.previsions.book}, and can be readily combined with a loss/utility function, as discussed in \citet{huntley.etal.decision}, for formal decision-making.  

A ``conformal + consonance'' perspective/approach is attractive due to its simplicity and ability to achieve both Type-1 and 2 prediction validity, but the addition of consonance might appear like an ad hoc adjustment or an after-thought.  In Section~\ref{S:im} we offer a new characterization of conformal prediction, within the {\em inferential model} (IM) framework of \citet{mainMartin, martinbook}, in which the consonance feature is a direct consequence of the construction.  Indeed, a distinguishing feature of the IM approach is its use of nested and suitably calibrated random sets for quantifying uncertainty about unobservable auxiliary variables.  Since nested random sets and calibration go hand in hand with consonance and validity, respectively, this is an interesting and natural way to interpret conformal prediction.  Beyond this characterization, the particular IM construction employed here---an extension of ideas in \citet{martin2015,MARTIN2018105}---is, to our knowledge, the first that is not based on parametric statistical model assumptions.  This nonparametric IM construction is of independent interest, and we are optimistic that the same ideas can be extended beyond prediction to the inference problem as well.  

Numerical examples are presented in Section~\ref{S:examples} to illustrate the IM-based approach to this prediction problem, including a two-dimensional prediction problem using the notion of {\em data depth} \citep{Tukey1975MathematicsAT, liu1999}.   We conclude in Section~\ref{S:discuss} with a summary and a few remarks about open problems.  The appendix contains a number of additional technical details.

\section{Background}
\label{S:background}

\subsection{The statistical problem}
\label{SS:stat}

To set the scene, suppose that there is an exchangeable process $Y_1,Y_2,\ldots$ with distribution $\prob$, where each $Y_i$ is a random variable, vector, etc.~taking values in a space $\YY$.  Recall that the sequence is {\em exchangeable} if, for any natural number $K$ and for any permutation $\sigma$ of the set $\I_K := \{1,2,\ldots,K\}$ of integers, the two random vectors, 
\[ (Y_1,\ldots,Y_K) \quad \text{and} \quad (Y_{\sigma(1)}, \ldots, Y_{\sigma(K)}), \]
have the same joint distribution.  This, in particular, implies that the marginal distributions of the $Y_i$'s are the same, so the process has no ``trend.'' But exchangeability allows for certain kinds of dependence, so our setup is more general than the common independent and identically distributed (iid) formulation.  Note also that we are not assuming any particular parametric form for the distribution $\prob$.  

The statistical problem is as follows.  Suppose we observe the first $n$ terms of the process, i.e., $Y^n=(Y_1,\ldots,Y_n)$.  With this data, and the assumption of exchangeability, the goal is to predict $Y_{n+1}$ using a method that is {\em valid} or reliable in a certain sense.  Section~\ref{S:validity} below provides details about the validity property.  Here we give a bit of background about the prediction problem at hand in the context of an example.  

For illustration, consider a setting where the $Y_i$'s are believed to be iid, with $\YY = \RR$.  If we furthermore assume that the common distribution is $\nm(\mu, \sigma^2)$, with unknown mean $\mu$ and variance $\sigma^2$, then the textbook $100(1-\alpha)$\% prediction interval for $Y_{n+1}$, based on the observed data $Y^n = y^n$, is 
\[ \plint_\alpha(y^n) = \hat\mu_n \pm t_{n-1}(\alpha/2) \hat\sigma_n (1+n^{-1})^{1/2}, \]
where $\hat\mu_n$ and $\hat\sigma_n$ are the sample mean and standard deviation of $y^n$, respectively, $t_{n-1}(\alpha/2)$ is the upper $\alpha/2$ quantile of the Student-t distribution with $n-1$ degrees of freedom, and $1-\alpha \in [0,1]$ is the user-specified confidence level.  To summarize these prediction intervals across different $\alpha$ levels, one can construct a kind of {\em predictive probability distribution} for $Y_{n+1}$, given $Y^n=y^n$, as 
\[ \Pi_{y^n}(A) = \stt_{n-1}\{A \mid \hat\mu_n, \hat\sigma_n(1+n^{-1})^{1/2}\}, \quad A \subseteq \YY, \]
where $\stt_{n-1}(A \mid m, s)$ denotes the probability of the event ``$m + s T \in A$'' when $T$ is a Student-t random variable with $n-1$ degrees of freedom.  If $\prob = \nm(\mu, \sigma^2)$, then it is well-known that the prediction interval achieves the nominal coverage probability  
\[ \prob\{ \plint_\alpha(Y^n) \ni Y_{n+1} \} = 1-\alpha, \quad \text{for all $(\alpha,n,\mu,\sigma)$}, \]
where the probability is with respect to $(Y^n,Y_{n+1})$; see Definition~\ref{def:weak} below.  One can similarly conclude that the aforementioned predictive distribution, $\Pi_{y^n}$, for $Y_{n+1}$ is reliable in the sense that its quantiles correspond to prediction intervals that achieve the nominal frequentist coverage probability.  

However, if $\prob$ is not normal, then the left-hand side of the above display could be very different from the $1-\alpha$ target.  Since no model assumption is 100\% certain, a notion of reliability that does not require such an assumption is desirable; see Section~\ref{SS:weak} below.  Beyond coverage probability of prediction intervals, the reliability of predictive distribution has received less attention.  Even if normality holds, what reliability guarantees does the predictive distribution offer beyond the coverage probability properties of the prediction intervals derived from it?  Without a model assumption, how might one construct a predictive distribution with certain reliability guarantees?  Is this even possible to do using ordinary probability distributions?  We discuss these questions in Section~\ref{SS:strong}.

\subsection{Imprecise probability}
\label{SS:ip}

Our developments below rely on certain concepts and definitions from the theory of imprecise probability, so here we present the relevant details.  An excellent reference for the material presented in this section is \citet{destercke.dubois.2014}.  

Start by defining a {\em capacity} as a set function $\lPi$ that maps subsets of $\YY$ to $[0,1]$, such that $\lPi(\varnothing)=0$, $\lPi(\YY) = 1$, and $A \subseteq B$ implies $\lPi(A) \leq \lPi(B)$; strictly speaking, when $\YY$ is not a finite space, one also needs $\lPi$ to be continuous from above and below \citep[e.g.,][Sec.~6.8]{vovk.shafer.2014} but we will skip these details here.  Capacities with no additional structure are too complex for practical purposes.  One basic and common requirement is that the capacity $\lPi$ be {\em super-additive}, i.e., 
\[ \lPi(A \cup B) \geq \lPi(A) + \lPi(B), \quad \text{for all disjoint $A$ and $B$}. \]
If $\lPi$ is additive, in the sense that equality holds in the above display for every pair of disjoint sets, and monotone, then it is a finitely-additive probability measure.  By letting $B = A^c$ be the complement of $A$ in the above display, it follows that
\[ \lPi(A) \leq 1-\lPi(A^c), \quad A \subseteq \YY. \]
With this gap between $\lPi(A)$ and $1-\lPi(A^c)$, it makes sense to define a dual function 
\begin{equation}
\label{eq:dual}
\uPi(A) = 1-\lPi(A^c), \quad A \subseteq \YY. 
\end{equation}
It follows from super-additivity that $\lPi(A) \leq \uPi(A)$ for all $A$, so it is common to refer to $\lPi$ and $\uPi$ as {\em lower} and {\em upper probabilities}, respectively.  



An important special case is when the lower probability $\lPi$ is determined by the distribution of a random set \citep[e.g.,][]{molchanov2005, nguyen2006introduction}, 
\[ \lPi(A) = \prob_\Y(\Y \subseteq A), \quad A \subseteq \YY, \]
where $\Y$ is a random subset of $\YY$ with distribution $\prob_\Y$, non-empty with $\prob_\Y$-probability~1.  A further special case, especially important in what follows, corresponds to a nested random set.  Under this setting, the upper probability $\uPi$ is called a {\em possibility measure} \citep[e.g.,][]{dubois.prade.book, hose.hanss.2021}, a simple yet powerful imprecise probability model.  To relate things back to Shafer's developments, a possibility measure is equivalent to a {\em consonant} belief/plausibility function, one where there exists a function $\pi$, mapping $\YY$ to $[0,1]$, such that $\sup_y \pi(y) = 1$ and the upper probability satisfies
\[ \uPi(A) = \sup_{y \in A} \pi(y), \quad A \subseteq \YY. \]
The function $\pi$ is analogous to the density/mass function that characterizes an ordinary probability distribution, and will be referred to below as a {\em plausibility contour}.  Note that $\pi(y)$ is the upper probability assigned to the singleton $\{y\}$, which means that the entire lower and upper probability pair is determined by the plausibility assigned to singletons.  This is a unique feature of the consonance model. 

Regardless of the mathematical form of the imprecise probability model, the literature focuses primarily on a subjective interpretation, \`a la de Finetti.  That is, the lower and upper probabilities are treated as subjective degrees of belief, but with an associated ``action'' or behavioral element to connect it to the real world.  The standard interpretation is that the lower probability $\lPi(A)$ is the the agent's largest buying price for a gamble that pays 1 unit if the event $A$ is realized.  Similarly, the upper probability $\uPi(A)$ is the agent's smallest selling price for a gamble that pays 1 unit if the event $A$ is realized.  Given this gambling interpretation, it makes sense to consider what it takes to ensure that the agent who follows such a policy cannot be made a sure loser.  This {\em no sure loss} property is a slightly weaker form of that generally referred to as {\em coherence}, and there is a rich literature on the theory of coherent lower and upper probabilities (or previsions more generally); see, e.g., \citet{walley1991}, \citet{lower.previsions.book}, and the references therein. Here it will be enough for the reader to keep in mind that avoiding sure loss is essential to the behavioral interpretation of lower and upper probabilities, ensuring that the agent's internal assessment of his uncertainty is not irrational; that is, the agent believes that he cannot be made a sure loser.  Of course, the gambling setup need not be real in order for these notions to be meaningful \citep{shafer.betting}.

An additional dimension that will be relevant to the present discussion is that our lower and upper probabilities will be data-dependent, i.e., our $\lPi_{y^n}$ and $\uPi_{y^n}$ will depend on the previously observed data $y^n$ in a certain way.  And it may not be through specification of an over-arching (imprecise) probability model and formal updating/conditioning rules as described in, e.g., \citet{walley1991}.  That is, we will be considering general maps from data to lower and upper probability pairs and, in addition to wanting these to be meaningful in a behavioral sense for each fixed $y^n$, we also want the predictions drawn from this formulation to be reliable in a statistical sense, i.e., we will be concerned with the sampling distributions of $\lPi_{Y^n}(A)$ and $\uPi_{Y^n}(A)$ as a function of $Y^n$ having distribution $\prob$ for fixed $A \subseteq \YY$.  The discussion of Type-2 validity in Sections~\ref{SS:strong}--\ref{SS:type3} aim at tying the behavioral and statistical reliability notions together.

\section{Prediction validity}
\label{S:validity}

\subsection{Type-1 validity: prediction region coverage}
\label{SS:weak}

Recall that the goal is to predict $Y_{n+1}$ given an observed set of values $Y^n = (Y_1,\ldots,Y_n)$ from the process $Y_1,Y_2,\ldots$ with distribution $\prob$, assumed throughout to be exchangeable.  As discussed in Section~\ref{S:intro}, the logic behind our statistical reasoning fails if predictions are not valid in a certain sense. But what does ``validity'' mean in this context?  A first and relatively weak requirement is that prediction regions---see Section~\ref{SS:stat}---achieve the advertised coverage probabilities. 

\begin{defn}
\label{def:weak}
Let $\{\plint_\alpha(y^n): \alpha \in [0,1]\}$ denote a family of prediction regions for $Y_{n+1}$ based on observed data $Y^n=y^n$.  Then the prediction is {\em Type-1 valid} if the prediction regions achieve the advertised coverage probability uniformly in $n$ and $\prob$, i.e., 
\begin{equation}
\label{eq:weak}
\prob\{\plint_\alpha(Y^n) \ni Y_{n+1}\} \geq 1-\alpha, \quad \text{for all $(\alpha,n,\prob)$}.
\end{equation}
Here and below, the probability is with respect to the joint distribution of $(Y^n,Y_{n+1})$.  We mention this only to be clear that this is marginal coverage of the prediction region, as opposed to conditional coverage for a given $Y^n=y^n$.
\end{defn}

There are many different strategies available for constructing prediction intervals, one of the most common is Bayesian, which is based on a fully-specified probability model for the observables.  In its most general form, the Bayesian approach starts with a prior distribution for $\prob$, which determines a prior predictive distribution and, ultimately, a posterior predictive distribution for $Y_{n+1}$, given $Y^n=y^n$.  Do prediction intervals derived from a Bayesian predictive distribution satisfy \eqref{eq:weak}?  To investigate this, we consider a simple-yet-powerful Bayesian nonparametric approach, one that assumes data are iid with common marginal distribution, to which we assign a Dirichlet process prior \citep{ferguson1973} with base measure $G$---a probability measure on the $Y$ space---and precision parameter $\delta > 0$.  This model is rather flexible and has strong theoretical support; see, e.g., \citet{ghosal2010} and \citet[][Ch.~4]{ghosal.vaart.book}.  With this formulation, it turns out that the posterior predictive distribution, $\Pi_{Y^n}$, for $Y_{n+1}$ is quite simple: it is a mixture of $G$ and the empirical distribution $\PP_n$ based on $Y^n$, i.e., 
\[ \Pi_{Y^n}(A) = \tfrac{\delta}{\delta + n} \, G(A) + \tfrac{n}{\delta + n} \, \PP_n(A), \quad A \subseteq \RR. \]
Then a family of prediction intervals $\plint_\alpha$ can be readily derived by extracting the $\alpha/2$ and $1-\alpha/2$ quantiles from the aforementioned predictive distribution. 
 
For our simulation, we take the Dirichlet process prior with $G=\nm(0,1)$ and $\delta=1$.  We consider three sample sizes, $n \in \{20, 30, 40\}$, along with three data generating distributions: the normal distribution with mean zero and variance 0.5, the standard Cauchy distribution, and the standard skewed normal distribution \citep{azzalini} with skewness parameter equal to 1. For each of these scenarios, 5000 data sets are generated and, from each, a 90\% prediction interval is extracted.  Table~\ref{tab:coverage} reports the estimated coverage probabilities of these prediction intervals.  Note how the coverage can be quite low, even for relatively large sample sizes.  As one might expect, the coverage probability improves as $n$ increases; however, \eqref{eq:weak} effectively involves an infimum over $\prob$, so the method's performance is determined by the ``worst-case distribution,'' which can be no better than the results in the Cauchy distribution column in Table~\ref{tab:coverage}.  Therefore, Type-1 validity in the sense of \eqref{eq:weak} fails.  

\begin{table}[t]
\centering
\begin{tabular}{cc c c}
\hline
$n$ & Normal & Cauchy & Skew Normal \\
\hline
20 & 0.883 & 0.731 & 0.796  \\
30 & 0.916 & 0.821 & 0.847  \\
40 & 0.921 & 0.854 & 0.906\\
\hline
\end{tabular}
\caption{Estimated coverage probabilities of 90\% Bayesian prediction intervals for $Y_{n+1}$, based on a Dirichlet process prior, with data coming from various true distributions.}
\label{tab:coverage}
\end{table}

A take-away message is that a full probability model---even a very flexible nonparametric one---is apparently not flexible enough for the quantiles of its predictive distribution to be valid prediction intervals in the sense of \eqref{eq:weak}. In fact, as we discuss in Section~\ref{S:discuss}, interesting parallels can be drawn to existing results in the imprecise probability literature that strongly suggest probability measures fail to provide validity in the sense of Definition~\ref{def:weak} and, moreover, that consonant plausibility functions (Section~\ref{SS:consonant}) are the unique imprecise probability that succeed.  An even weaker conclusion achieved by a probability measure is that \eqref{eq:weak} is achieved as $n \to \infty$ for certain $\prob$.  But then the method's utility depends on whether the user is willing to assume that their unknown $\prob$ is in that class and that their $n$ exceeds an unknown threshold.  Therefore, in order to achieve even the relatively weak notion of validity in \eqref{eq:weak}, it seems that considerations beyond classical/precise probability theory are required.  We will reach a similar conclusion when we consider new notions of validity below.

\subsection{Type-2 validity: beyond prediction region coverage}
\label{SS:strong}

Beyond prediction regions, some more general uncertainty quantification may be desired.  For example, with a Bayesian posterior predictive distribution for $Y_{n+1}$, there are many things one can do, including plot the density function, evaluate probabilities for arbitrary assertions about $Y_{n+1}$, marginalize to a predictive distribution for a certain feature of $Y_{n+1}$, etc.  In fact, since one can obtain prediction regions without a predictive distribution, the practical motivation for constructing a predictive distribution in the first place is to perform these other tasks.  Therefore, these other tasks must be of primary importance, so a different notion of validity is needed to ensure that they do not result in systematically misleading predictions.  

To formalize this more general approach to prediction, define a {\em probabilistic predictor} as a map $y^n \mapsto (\lPi_{y^n}, \uPi_{y^n})$ that converts data into a pair of data-dependent lower and upper probabilities for $Y_{n+1}$;  we only require that $\lPi_{y^n}$ and $\uPi_{y^n}$ satisfy the properties of a capacity for (almost) all $y^n$, but we will use the lower/upper probability terminology. This includes the case of a precise predictive distribution $\Pi_{y^n}$, like that based on the Dirichlet process prior in the above example, if $\lPi_{y^n} = \uPi_{y^n} = \Pi_{y^n}$.  For technical reasons, we require the functions $y^n \mapsto \lPi_{y^n}(A)$ and $y^n \mapsto \uPi_{y^n}(A)$ to be measurable with respect to the $\sigma$-algebra on which $\prob$ is defined, for all $A$.  

We have opted for the ``probabilistic predictor'' terminology as opposed to ``confidence predictor'' in \citet{Vovk:2005} to highlight the difference between our objectives and theirs.  As discussed above, we are interested in general uncertainty quantification about $Y_{n+1}$, beyond just prediction regions; for example, we anticipate evaluating (lower and upper) probabilities for general assertions about $Y_{n+1}$.  By an assertion about $Y_{n+1}$, we mean a subset of $\YY$ that may or may not contain $Y_{n+1}$; that is, we associate the subset $A \subseteq \YY$ with the assertion ``$Y_{n+1} \in A$.''  Note that assertions $A$ about $Y_{n+1}$ are virtually unlimited, e.g., $A=\{y \in \YY: \phi(y) \in B\}$ where $\phi$ is any relevant feature and $B \subseteq \phi(\YY)$ is any relevant assertion about $\phi(Y_{n+1})$.  Then the function $A \mapsto (\lPi_{Y^n}, \uPi_{Y^n})(A)$ provides uncertainty quantification about $Y_{n+1} \in A$, given data $Y^n=y^n$.  To assess whether this uncertainty quantification is meaningful, below we consider properties of the sampling distribution of $(\lPi_{Y^n}, \uPi_{Y^n})(A)$ as a function of the data $Y^n$ for fixed $A$. 

For a given assertion $A$ about $Y_{n+1}$, the events,
\begin{equation}
\label{eq:events}
\{\text{$\uPi_{Y^n}(A)$ is small, $Y_{n+1} \in A$}\} \quad \text{and} \quad \{\text{$\lPi_{Y^n}(A)$ is large, $Y_{n+1} \not\in A$}\}, 
\end{equation}
correspond to potentially erroneous predictions and, therefore, are undesirable.  We focus on small and large (upper and lower) probabilities because of what Shafer calls {\em Cournot's principle} \citep{cournot1843}, which states that probability theory is relevant to the real-world only through the assertions it assigns small or large probability to; see \citet{shafer2007} and \citet[][Ch.~10.2]{shafer.vovk.book.2019}.  From this perspective, if a user adopts $(\lPi_{Y^n},\uPi_{Y^n})$ as his predictive distribution, then he is inclined to reject (sell gambles on) those assertions $A$ with sufficiently small $\uPi_{Y^n}(A)$ and accept (buy gambles on) those with sufficiently large $\lPi_{Y^n}(A)$.  And for those two cases, respectively, if $Y_{n+1}$ happens to be in or out of $A$, then his prediction would be wrong (and he loses money).  Therefore, the goal of this new validity property is to ensure that these undesirable events are controllably rare.  

\begin{defn}
\label{def:strong}
The probabilistic predictor $y^n \mapsto (\lPi_{y^n}, \uPi_{y^n})$ is {\em Type-2 valid} if 
\begin{equation}
\label{eq:strong}
\prob\{ \uPi_{Y^n}(A) \leq \alpha \, , \, Y_{n+1} \in A\} \leq \alpha, \quad \text{for all $(\alpha,n,A,\prob)$}.
\end{equation}
\end{defn}


The ``for all $A$'' part of the definition is important for at least three reasons.  First, as we discussed above, the motivation for introducing a predictive distribution in the first place is to be able to evaluate probabilities for general assertions about $Y_{n+1}$, so limiting the reliability of these probabilities to certain ``nice'' assertions would defeat the purpose. Second, since 
\[ \prob\{ \uPi_{Y^n}(A) \leq \alpha \, , \, Y_{n+1} \in A\} \leq \prob(Y_{n+1} \in A), \]
one could easily find a single $A$ to make the upper bound less than $\alpha$ independent of how the probabilistic predictor assigns belief.  Third, if the bound \eqref{eq:strong} holds for all $A$, then a similar bound holds for the second kind of event in \eqref{eq:events}.  To see this, by the duality property in \eqref{eq:dual}, i.e., $\lPi_{Y^n}(A) = 1-\uPi_{Y^n}(A^c)$, it follows that 
\[ \prob\{\lPi_{Y^n}(A) \geq 1-\alpha \, , \, Y_{n+1} \not\in A\} = \prob\{\uPi_{Y^n}(A^c) \leq \alpha \, , \, Y_{n+1} \in A^c\}. \]
But $A^c$ is an assertion too, so it follows from the ``for all $A$'' part of \eqref{eq:strong} that the right-hand side is bounded by $\alpha$, hence the second event in \eqref{eq:events} has controllably small probability.  

One might argue that forcing the probabilistic predictor to satisfy these calibration properties for all assertions is too restrictive.  In any fixed application, the data analyst is free to decide which assertions are deserving of focus and which can be ignored, and he must accept the consequences of his decisions, positive or negative.  However, as developers of general methods for use in all sorts of applications, statisticians should not be making those decisions for the data analysts.  Instead, the methods we develop should be as conservative as necessary to provide adequate protection against the most challenging question a practitioner might ask.  One can also view this from the behavioral  perspective described in Section~\ref{SS:ip}.  Once an agent advertises/commits to his buying and selling prices based on the probabilistic predictor, the choice of $A$ is no longer in his hands---competing agents will strategically choose $A$'s to maximize their winnings.  So, in constructing a probabilistic predictor, all relevant assertions should be considered.  Besides, there is an even stronger version of Type-2 validity that can be established in certain cases; see Section~\ref{SS:type3}. 


Some further insights can be gained about what it takes to achieve \eqref{eq:strong} by re-expressing the probability on the left-hand side.  By conditioning on $Y^n$ and using the iterated expectation formula, we get 
\begin{align*}
\prob\{ \uPi_{Y^n}(A) \leq \alpha \, , \, Y_{n+1} \in A\} & = \E \bigl( 1_{\uPi_{Y^n}(A)\leq \alpha} \, 1_{Y_{n+1} \in A} \bigr) \\
& = \E \bigl\{ \E\bigl( 1_{\uPi_{Y^n}(A)\leq \alpha} \, 1_{Y_{n+1} \in A} \mid Y^n \bigr) \bigr\} \\
& = \E\bigl\{ 1_{\uPi_{Y^n}(A)\leq \alpha} \, \prob(Y_{n+1} \in A \mid Y^n) \bigr\},
\end{align*}
where $1_E$ denotes the indicator of event $E$. Then \eqref{eq:strong} implies 
\begin{equation}
\label{eq:strong.alt}
\E\bigl\{ 1_{\uPi_{Y^n}(A)\leq \alpha} \, \prob(Y_{n+1} \in A \mid Y^n) \bigr\} \leq \alpha \quad \text{for all $(\alpha,n,A,\prob)$}. 
\end{equation}
The inequality is clearly satisfied by the true conditional probability, i.e., $\uPi_{Y^n}(\cdot) \equiv \prob(Y_{n+1} \in \cdot \mid Y^n)$, but are there any other probabilities that satisfy it?  A closer look at this inequality reveals that the probabilistic predictor must {\em dominate} the unknown conditional distribution in a certain weak sense. Roughly, for those $Y^n$ such that $\prob(Y_{n+1} \in A \mid Y^n)$ tends to be large, $\uPi_{Y^n}(A)$ cannot tend to be small. Compare this to the prediction calibration property in Equation~(3) of \citet{denoeux.li.2018}---see, also, \citet{denoeux2006}---that, in our notation/terminology, aims to control the $\prob$-probability that $\uPi_{Y^n}(A)$ is bigger than $\prob(Y_{n+1} \in A \mid Y^n)$.  Our focus, on the other hand, is on directly controlling the probability of those ``bad'' events in \eqref{eq:events} that may lead to prediction errors.  

Dominance properties are common in the imprecise probability literature, so it is natural to compare these more familiar notions of dominance with that implied by Type-2 validity.  This comparison is interesting because coherence---in the sense of de Finetti, Walley, Williams, etc.---or the slightly weaker no sure loss property is focused on the internal rationality of probabilistic reasoning, while validity is focused on the external operating characteristics of a user-specified probabilistic predictor.  Intuitively, a procedure cannot be effective if it violates certain logical principles but formal investigations into this connection are limited.  In this direction, we have the following basic result.

\begin{prop}
\label{prop:no.sure.loss}
If, for some assertion $A$ about $Y_{n+1}$, with $A \subseteq \YY$, the probabilistic predictor is strictly upper-bounded away from the true marginal probability, i.e., if  
\begin{equation}
\label{eq:sure.loss}
\sup_{y^n} \uPi_{y^n}(A) < \prob(Y_{n+1} \in A), 
\end{equation}
then Type-2 validity in the sense of Definition~\ref{def:strong} fails.  
\end{prop}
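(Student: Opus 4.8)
The plan is to prove the contrapositive directly: assuming \eqref{eq:sure.loss} holds for the given $A$, $n$, and $\prob$, I will exhibit a single level $\alpha$ at which the strong-validity inequality \eqref{eq:strong} is violated. Since Definition~\ref{def:strong} demands \eqref{eq:strong} for \emph{all} $(\alpha,n,A,\prob)$, producing one failing tuple suffices to conclude that strong validity fails.

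First I would introduce shorthand $p := \prob(Y_{n+1} \in A)$ for the true marginal probability and $s := \sup_{y^n} \uPi_{y^n}(A)$ for the uniform upper bound, so that the hypothesis \eqref{eq:sure.loss} reads simply $s < p$. The key observation is that, by definition of the supremum, $\uPi_{y^n}(A) \le s$ for every realization $y^n$; consequently the event $\{\uPi_{Y^n}(A) \le s\}$ has $\prob$-probability one. The only point requiring a moment's care is that the supremum need not be attained, but attainment is irrelevant here because the bound $\uPi_{Y^n}(A) \le s$ holds for all outcomes regardless, which is all that is needed to make this event sure.

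Next I would take $\alpha = s$ (indeed any $\alpha \in [s,p)$ works equally well). Because the first event in the joint probability is then certain, the intersection collapses to the second event, so that $\prob\{\uPi_{Y^n}(A) \le \alpha,\, Y_{n+1} \in A\} = \prob(Y_{n+1} \in A) = p$. Since the hypothesis gives $p > s = \alpha$, this yields $\prob\{\uPi_{Y^n}(A) \le \alpha,\, Y_{n+1} \in A\} > \alpha$, which contradicts \eqref{eq:strong} for the tuple $(\alpha,n,A,\prob)$. Hence strong validity fails, as claimed.

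I do not anticipate a substantive obstacle: the argument is essentially the one-line observation that a uniform upper bound strictly below the marginal forces the ``small upper probability'' event to be certain at level $\alpha = s$, after which the joint probability can be no smaller than the marginal $p > s$. The two subtleties worth flagging are the possible non-attainment of the supremum (already handled above) and the scope of the claim, which concerns Definition~\ref{def:strong} in its plain form; were one instead to use the $k_n(\alpha)$ variant noted after that definition, the same choice $\alpha = s$ would continue to work in the near-identity regime $k_n(\alpha) \approx \alpha$, provided $k_n(s) \ge s$.
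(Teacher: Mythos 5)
Your proof is correct and takes essentially the same approach as the paper's: both observe that under \eqref{eq:sure.loss} one can choose $\alpha$ with $\sup_{y^n}\uPi_{y^n}(A) \leq \alpha < \prob(Y_{n+1}\in A)$, so that the event $\{\uPi_{Y^n}(A)\leq\alpha\}$ is sure and the joint probability in \eqref{eq:strong} collapses to the marginal $\prob(Y_{n+1}\in A) > \alpha$, a violation. The only cosmetic differences are that the paper routes the argument through the conditional-expectation reformulation \eqref{eq:strong.alt} and takes $\alpha$ strictly between the supremum and the marginal, whereas you verify \eqref{eq:strong} directly with $\alpha$ equal to the supremum, correctly noting that attainment of the supremum is irrelevant.
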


\begin{proof}
Since ``$\sup_{y^n} \uPi_{y^n}(A) \leq \alpha$'' implies ``$\uPi_{Y^n}(A)\leq \alpha$,'' the indicator function of the former event is no more than that of the latter.  Consequently, the left-hand side of \eqref{eq:strong.alt} can be lower bounded as 
\[ \E\bigl\{ 1_{\uPi_{Y^n}(A)\leq \alpha} \, \prob(Y_{n+1} \in A \mid Y^n) \bigr\} \geq 1_{\sup_{y^n} \uPi_{y^n}(A) \leq \alpha} \, \prob(Y_{n+1} \in A). \]
By \eqref{eq:sure.loss}, there exists a number $\alpha$ such that 
\[ \sup_{y^n} \uPi_{y^n}(A) < \alpha < \prob(Y_{n+1} \in A). \]
It follows that, for this $\alpha$, 
\[ \E\bigl\{ 1_{\uPi_{Y^n}(A)\leq \alpha} \, \prob(Y_{n+1} \in A \mid Y^n) \bigr\} \geq \prob(Y_{n+1} \in A) > \alpha, \]
which is a violation of the inequality in \eqref{eq:strong.alt}.  
\end{proof}

The hypothesis \eqref{eq:sure.loss} of the above proposition is an instance of sure loss as discussed in the imprecise probability literature; see, e.g., Condition~(C7) in \citet[][Sec.~6.5.2]{walley1991} or Definition~3.3 in \citet{gong.meng.update}.  Since sure loss is among the most egregious violations of rationality that the theory strives to avoid, we find comfort in the {\em validity implies no sure loss} conclusion that can be drawn from Proposition~\ref{prop:no.sure.loss}.  Note that the same conclusion can be reached if the probabilistic predictor's lower probabilities are strictly and uniformly lower-bounded away from the marginal probability.  

Continuing in this direction, the no sure loss phenomenon is like a very basic behavioral sanity check, ensuring that the agent who sets buying and selling prices based on a Type-2 valid probabilistic predictor is not sure to lose money.  The coherence property mentioned in passing in Section~\ref{SS:ip} is more fundamental than no sure loss, so it makes sense to consider how Type-2 validity and coherence might be related.  It turns out that there is a close connection between coherence of an upper probability and the set of probability measures it dominates.  In particular, \citet[][Theorem~2]{williams.previsions} proved a version of the celebrated {\em upper envelope theorem}, which states that a conditional upper probability is (W-)coherent if and only if it equals the upper envelope of a suitable collection of ordinary conditional probabilities.  The notion of coherence in \citet[][Sec.~7.1.4]{walley1991} is generally stronger than W-coherence---``W'' for Williams---so Walley's envelope theorem (Sec.~7.1.6) has only the ``if'' part of Williams's; see \citet[][Appendix~K]{walley1991} for details.  This distinction is not important for us here because the notion of dominance implied by \eqref{eq:strong.alt} is relative to a particular conditional probability measure; so being an upper envelope of {\em some} collection of ordinary conditional probabilities is not enough to forge a connection between Type-2 validity and coherence.  Instead, we have to be explicit about the collection of probabilities.  

Let $\Q$ denote a collection of candidate joint distributions $\qrob$ for the process $Y_1,Y_2,\ldots$.
Given $Y^n=y^n$, under the appropriate regularity conditions on $\Q$ (weak-$*$ compact and convex), the upper envelope \eqref{eq:envelope} of the conditional probabilities corresponding to members of $\Q$ is a coherent upper probability in the sense of Walley. Moreover, if $\Q$ contains the true distribution $\prob$, then the upper envelope also satisfies Type-2 validity.  

\begin{prop}
\label{prop:coherent}
Given the collection of probability distributions $\Q$, and given $Y^n=y^n$, define the probabilistic predictor 
\begin{equation}
\label{eq:envelope}
\uPi_{y^n}(A) = \sup\{\qrob(Y_{n+1} \in A \mid y^n): \qrob \in \Q\}.
\end{equation}
If $\Q$ contains the true distribution $\prob$ of the process, then the probabilistic predictor above is Type-2 valid in the sense of Definition~\ref{def:strong}. 
\end{prop}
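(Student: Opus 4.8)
The plan is to exploit the membership $\prob \in \Q$ to establish a pointwise dominance relation, and then feed that relation into the iterated-expectation identity already derived just before Proposition~\ref{prop:no.sure.loss}. The entire argument is short: once dominance is in hand, strong validity follows by a one-line bound inside the conditional expectation.

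First I would observe that, since $\prob$ is one of the distributions over which the supremum in \eqref{eq:envelope} is taken, the upper predictive probability dominates the true conditional probability at every data value:
\[ \uPi_{y^n}(A) = \sup_{\qrob \in \Q} \qrob(Y_{n+1} \in A \mid y^n) \geq \prob(Y_{n+1} \in A \mid y^n), \]
for every $y^n$ and every assertion $A$. This is the crux of the proof and is immediate from the definition of the supremum; it makes precise the informal ``weak dominance'' remark following \eqref{eq:strong.alt}, where it was noted that a valid predictor cannot assign small upper probability where the true conditional probability is large.

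Next I would invoke the conditioning computation leading to \eqref{eq:strong.alt}, which holds for any probabilistic predictor, to rewrite the left-hand side of \eqref{eq:strong} as
\[ \prob\{ \uPi_{Y^n}(A) \leq \alpha \,, \, Y_{n+1} \in A\} = \E\bigl\{ 1_{\uPi_{Y^n}(A)\leq \alpha} \, \prob(Y_{n+1} \in A \mid Y^n) \bigr\}. \]
On the event $\{\uPi_{Y^n}(A) \leq \alpha\}$, the dominance inequality gives $\prob(Y_{n+1} \in A \mid Y^n) \leq \uPi_{Y^n}(A) \leq \alpha$, so I may replace the conditional probability inside the expectation by the constant $\alpha$ and bound
\[ \E\bigl\{ 1_{\uPi_{Y^n}(A)\leq \alpha} \, \prob(Y_{n+1} \in A \mid Y^n) \bigr\} \leq \alpha \, \prob\{\uPi_{Y^n}(A) \leq \alpha\} \leq \alpha, \]
which is exactly \eqref{eq:strong}, uniformly in $(\alpha, n, A)$ and for every $\prob \in \Q$; this is the relevant sense of strong validity, namely validity over the model class $\Q$.

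I do not anticipate a serious obstacle, since the argument reduces to the single observation that set-membership $\prob \in \Q$ forces the envelope to dominate the correct conditional probability. The only point requiring care is measurability: one must ensure that $y^n \mapsto \uPi_{y^n}(A)$ is measurable, so that the indicator and the expectations above are well defined. As $\uPi_{y^n}(A)$ is a pointwise supremum of conditional probabilities, this holds under mild regularity on $\Q$ (e.g., $\Q$ countable, or a suitable separability/continuity condition), and I would confine this to a remark rather than belabor it within the main proof.
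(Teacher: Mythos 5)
Your proof is correct and follows essentially the same route as the paper's: the paper's own argument consists precisely of noting that $\prob \in \Q$ forces $\uPi_{y^n}(A) \geq \prob(Y_{n+1} \in A \mid y^n)$ and then declaring that the claim ``follows immediately from \eqref{eq:strong.alt}'' --- the bound you write out (replacing the conditional probability by $\alpha$ on the event $\{\uPi_{Y^n}(A) \leq \alpha\}$) is exactly the step the paper leaves implicit. Your additional measurability remark is a reasonable side note but not part of the paper's argument.
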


\begin{proof}
By definition of the probabilistic predictor, if $\prob \in \Q$, then 
\[ \uPi_{y^n}(A) \geq \prob(Y_{n+1} \in A \mid y^n), \]
and the claim follows immediately from \eqref{eq:strong.alt}.  
\end{proof}

It is not difficult to define a collection $\Q$ of candidate joint distributions, the challenge is being comfortable with the non-trivial assumption that $\prob \in \Q$.  For one extreme example, suppose that $\prob$ is actually known.  Then, as mentioned above, we could take $\uPi_{y^n}(A)$ equal to $\prob(Y_{n+1} \in A \mid y^n)$ and achieve both validity and coherence, in the most efficient way possible.  For the other extreme, suppose that nothing beyond exchangeability of $\prob$ is known, which is the situation we are considering in this paper.  Then $\Q$ is huge and the upper envelope to the collection of corresponding conditional distributions is effectively vacuous.  The vacuous upper probability is both valid and coherent, but in the least efficient way possible, rendering it effectively useless.

For situations like in the present paper, where virtually nothing about $\prob$ is known but we want efficient predictions, we cannot rely on a single probability model or on the upper envelope derived from a collection $\Q$ of such models, so a new idea is needed.  Fortunately, it is possible to construct a  probabilistic predictor that is valid and only requires an assumption of exchangeability in $\prob$; see Section~\ref{S:consonance}.  This approach requires imprecision in the probabilistic predictor, but does not proceed by formulating an imprecise probability model, $\Q$, and following formal updating rules.  In fact, the notion of validity that this approach achieves, discussed next, is even stronger than Type-2 validity.

\ifthenelse{1=1}{}{
While Proposition~\ref{prop:coherent} gives only a sufficient condition for validity, it is clear that validity cannot be achieved in a practically useful way via a model-based approach as described above without making some additional structural assumptions about $\prob$.  Moreover, the above discussion strongly suggests the following 

\begin{conj}
If the probabilistic predictor is an additive probability measure and satisfies Type-2 validity, then it must depend on $\prob$ and, {\color{red}therefore, is inaccessible to practitioners.}
\end{conj}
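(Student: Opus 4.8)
The plan is to argue by contraposition: fix an additive predictor $y^n \mapsto \Pi_{y^n}$ (so $\lPi_{y^n} = \uPi_{y^n} = \Pi_{y^n}$) that is a function of the data alone, hence does \emph{not} depend on $\prob$, and exhibit a continuous exchangeable $\prob$, an assertion $A$, a level $\alpha$, and a sample size $n$ that together violate \eqref{eq:strong}. Throughout I would work from the equivalent form \eqref{eq:strong.alt}; specializing to the iid case $Y_i \iid F$, where $\prob(Y_{n+1} \in A \mid Y^n) = F(A)$, it reads
\[
F(A)\,\prob_F\{\Pi_{Y^n}(A) \le \alpha\} \le \alpha \quad \text{for all } (\alpha,n,A,F),
\]
henceforth $(\star)$. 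Since $\Pi$ is additive, applying $(\star)$ to $A^c$ and using $\Pi_{y^n}(A^c) = 1 - \Pi_{y^n}(A)$ yields the dual bound $\prob_F\{\Pi_{Y^n}(A) \ge 1-\alpha\} \le \alpha / F(A^c)$, so strong validity squeezes the law of $\Pi_{Y^n}(A)$ from both sides relative to the target $F(A)$.

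First I would extract the qualitative consequences already latent in Proposition~\ref{prop:no.sure.loss}. Its contrapositive gives, for every $A$ and every $\prob$, the bound $\sup_{y^n}\Pi_{y^n}(A) \ge \prob(Y_{n+1} \in A)$, and the dual (lower-probability) version gives $\inf_{y^n}\Pi_{y^n}(A) \le \prob(Y_{n+1} \in A)$. Now I would hold $A$ fixed and vary $\prob$: because $\Pi$ is $\prob$-free, the two numbers $\inf_{y^n}\Pi_{y^n}(A)$ and $\sup_{y^n}\Pi_{y^n}(A)$ are fixed, yet they must bracket $F(A)$ for \emph{every} admissible $F$. Choosing continuous $F$ with $F(A)$ arbitrarily close to $0$ and to $1$ (possible for any $A$ with nonempty interior and whose complement has nonempty interior) forces $\inf_{y^n}\Pi_{y^n}(A) = 0$ and $\sup_{y^n}\Pi_{y^n}(A) = 1$. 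Thus a strongly valid additive predictor cannot commit on any nontrivial assertion: as the data range over $\YY^n$, its probability of $A$ must sweep across the entire unit interval.

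The decisive step is to turn this ``cannot commit'' phenomenon into an outright contradiction, and this is where I expect the real work to lie. The mechanism I would exploit is finite additivity: for each fixed $y^n$ the single number $\Pi_{y^n}(A)$ must simultaneously serve as a weakly dominating surrogate for $F(A)$ for \emph{all} distributions $F$ under which $y^n$ is a plausible sample. The plan is to construct two continuous distributions $F_0, F_1$ together with an assertion $A$ such that (i) their $n$-fold sampling laws are statistically close, so that on a $\prob_{F_0}$-high-probability event the likelihood ratio is near one and the fixed map $y^n \mapsto \Pi_{y^n}(A)$ must behave identically under both, while (ii) $F_0(A)$ and $F_1(A)$ are driven apart, so that $(\star)$ for $F_0$ and its dual for $F_1$ impose incompatible constraints on the common law of $\Pi_{Y^n}(A)$. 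A natural vehicle is a contamination family in which $A$ is tied to a region that is rarely resolved by the sample yet carries substantial mass, decoupling ``what the sample looks like'' from ``where $Y_{n+1}$ lands.''

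I should be candid that closing this last step is delicate, and is presumably why the statement is posed as a Claim rather than a proposition. The difficulty is quantitative: for any \emph{single} $F$, the squeeze from $(\star)$ and its dual still leaves genuine slack, so the true conditional is not the only additive predictor that passes the test, and the contradiction must therefore be manufactured across the pair $\{F_0, F_1\}$. The coupling demanded by (i) fights directly against the separation demanded by (ii), since making $F_0(A)$ large typically makes $A$ well sampled and hence makes the two sampling laws easy to distinguish. Resolving this tension --- presumably by tying $A$ to an auxiliary coordinate (recall $Y_i$ may be vector valued) whose marginal mass is large but whose realized values in the sample are uninformative about $F(A)$ --- is the crux, after which the violation of \eqref{eq:strong} follows by choosing $\alpha$ strictly between the two incompatible targets, exactly as in the proof of Proposition~\ref{prop:no.sure.loss}.
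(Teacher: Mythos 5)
First, a point of calibration: the paper does not prove this statement. It is posed as a Claim precisely because the authors state they ``have not been able to prove this claim rigorously''; what the paper offers is heuristic support in Appendix~\ref{S:conjecture}, where, for finite $\YY$, strong validity is unwound into rank-indexed inequalities whose pooling over $A$ and $A^c$ (using additivity) yields
\[
\prob\bigl(Y_{n+1}\in A,\, Y^n = y^n[R;A]\bigr) \;\le\; \min_{y^n}\Pi_{y^n}(A) \;\le\; \prob(Y_{n+1}\in A) \;\le\; \max_{y^n}\Pi_{y^n}(A),
\]
together with a further $\prob$-dependent upper bound on the max; the conclusion that a $\prob$-free additive predictor cannot satisfy all such constraints for all $\prob$ is then argued only informally. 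Your first step---the contrapositive of Proposition~\ref{prop:no.sure.loss} plus its lower-probability dual, then varying $F$ to force $\inf_{y^n}\Pi_{y^n}(A)=0$ and $\sup_{y^n}\Pi_{y^n}(A)=1$---is correct and is essentially the same bracketing the paper derives, so up to that point you are on the paper's track, and, like the paper, you correctly recognize that ``cannot commit'' is not yet a contradiction.

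The genuine problem is your proposed decisive step, which is not merely unfinished but structurally impossible as stated. Your conditions (i) and (ii) are mutually exclusive in the exchangeable setting: since $Y_{n+1}$ has the same marginal law as each observed $Y_i$, one has
\[
\mathrm{TV}\bigl(F_0^{\otimes n}, F_1^{\otimes n}\bigr) \;\ge\; \mathrm{TV}(F_0,F_1) \;\ge\; \bigl|F_0(A) - F_1(A)\bigr|,
\]
so the $n$-sample laws can never be closer than the very separation you need to drive apart; the auxiliary-coordinate device cannot evade this because the bound is dimension-free---any assertion on which $F_0$ and $F_1$ disagree is itself a distinguishing event for every single observation. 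Worse, even granting the best achievable trade-off (e.g., contamination families, where separation $\Delta$ costs at least $\Delta$ of statistical distinguishability), the constraints that strong validity places on a fixed assertion are only \emph{upper} bounds on \emph{opposite} tails of the law of $\Pi_{Y^n}(A)$: your $(\star)$ bounds $\prob_F\{\Pi_{Y^n}(A)\le\alpha\}$ while its dual bounds $\prob_F\{\Pi_{Y^n}(A)\ge 1-\alpha\}$, and such one-sided bounds under two nearby laws are always jointly satisfiable---for instance by a predictor whose value at $A$ is approximately $\unif(0,1)$ distributed under both. So no choice of a single pair $\{F_0,F_1\}$ and a single $A$ can manufacture the contradiction; any rigorous proof would have to exploit the constraints across many distributions or many assertions simultaneously (or the conditional, non-iid structure of exchangeable $\prob$), which is exactly the part that the paper, too, leaves open.
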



We have not been able to prove this claim rigorously, 
but we do provide some additional heuristic support for this claim in Appendix~\ref{S:conjecture}. 
}

\subsection{Strong Type-2 validity: uniformity in $A$}
\label{SS:type3}

Type~2 validity's ``for all $A$'' component is critical for interpretation, as we explained above, but to some this may not be strong enough.  Indeed, the inequalities in \eqref{eq:strong} and \eqref{eq:strong.alt} hold assertion-wise in $A$, which may not be fully satisfactory.  In the basic gambling context we considered above, the agent's opponents would have access to the data at the time of prediction and, therefore, could make a strategic, perhaps data-dependent, choice of assertion $A$ for their transaction with the agent.  Having uniform-in-assertions control on the probability of making erroneous predictions would, therefore, be desirable to the agent since it helps to protect against these strategic choices.  Here we introduce a {\em strong Type-2 validity} property that provides this uniform control and discuss its consequences.  This idea developed out of personal communications with Professor V.~Vovk.

\begin{defn}
\label{def:type3}
The probabilistic predictor $y^n \mapsto (\lPi_{y^n}, \uPi_{y^n})$ is {\em strongly Type-2 valid} if 
\begin{equation}
\label{eq:type3}
\prob\{\text{$\uPi_{Y^n}(A) \leq \alpha$ and $Y_{n+1} \in A$ for some $A$}\} \leq \alpha, \quad \text{for all $(\alpha, n, \prob)$}. 
\end{equation}
(That the event in the above probability statement is measurable---despite being an uncountable union in general---is a consequence of Proposition~\ref{prop:type3} below.)
\end{defn}

Using the duality \eqref{eq:dual}, it is easy to check that an analogous condition holds for the lower probability $\lPi_{Y^n}$.  That is, \eqref{eq:type3} is equivalent to 
\[ \prob\{\text{$\lPi_{Y^n}(A) \geq 1-\alpha$ and $Y_{n+1} \not\in A$ for some $A$}\} \leq \alpha, \quad \text{for all $(\alpha, n, \prob)$}. \]
Thus, strong Type-2 validity controls the $\prob$-probability of both undesirable events in \eqref{eq:events}, uniformly in assertions. Compare this to the assertion-wise control in \eqref{eq:strong}.

As the name suggestions, strong Type-2 validity is stronger than Type-2: the left-hand side of \eqref{eq:type3} is no smaller than $\prob\{\uPi_{Y^n}(A) \leq \alpha \, , Y_{n+1} \in A\}$ for every $A$.  Therefore, it follows from Proposition~\ref{prop:no.sure.loss} above that strong Type-2 validity implies the probabilistic predictor is safe from the undesirable sure loss property \eqref{eq:sure.loss}.  But what kind of probabilistic predictor satisfies strong Type-2 validity?  

\begin{prop}
\label{prop:type3}
A necessary and sufficient condition for strong Type-2 validity is 
\begin{equation}
\label{eq:type3.alt}
\prob\bigl\{ \uPi_{Y^n}(\{Y_{n+1}\}) \leq \alpha \bigr\} \leq \alpha \quad \text{for all $(\alpha, n, \prob)$}. 
\end{equation}
\end{prop}

\begin{proof}
We prove that the two events, subsets of $\YY^{n+1}$,  
\begin{align*}
E_1 & = \{y^{n+1}: \text{$\uPi_{y^n}(A) \leq \alpha$ and $y_{n+1} \in A$ for some $A$}\} \\
E_2 & = \{y^{n+1}: \uPi_{y^n}(\{y_{n+1}\}) \leq \alpha\}
\end{align*}
are the same, which we do by proving $E_1 \supseteq E_2$ and $E_1 \subseteq E_2$.  First it is easy to see that $E_1 \supseteq E_2$ since, if $y^{n+1} \in E_2$, then $A$ can be taken as $A=\{y_{n+1}\}$.  Next, to show that $E_1 \subseteq E_2$, recall that the upper probability is monotone: if $A \subseteq B$, then $\uPi_{y^n}(A) \leq \uPi_{y^n}(B)$ for all $y^n$.  If $y^{n+1} \in E_1$, then there is a set $A$ such that $\uPi_{y^n}(A) \leq \alpha$ and contains $y_{n+1} \in A$.  By monotonicity, it follows that $\uPi_{y^n}(\{y_{n+1}\}) \leq \alpha$; therefore, $y^{n+1} \in E_2$ and, hence, $E_1 \subseteq E_2$.  This implies $\prob(E_1) = \prob(E_2)$, from which we can conclude that \eqref{eq:type3} holds if and only if \eqref{eq:type3.alt} holds, hence the claim.   
\end{proof}

In Proposition~\ref{prop:coherent} in Section~\ref{SS:strong} above, we showed that true conditional probability, or any upper envelope corresponding to a set of probabilities that contains $\prob$, is Type-2 valid.  In light of the characterization in Proposition~\ref{prop:type3}, it is clear that, at least in continuous-data problems, the probabilistic predictor cannot be additive and achieve strong Type-2 validity.  This is because the probability on the left-hand side of \eqref{eq:type3.alt} is always equal to 1.  The same is true for the upper envelope.  So, as expected, as we ask for more control on the output of the probabilistic predictor, additivity eventually becomes too restrictive and imprecision is necessary.  

Proposition~\ref{prop:type3} also sheds light on what features of the probabilistic predictor are most relevant.  Indeed, this characterization is in terms of the upper probability assigned to singleton sets, so it is only natural to use a probabilistic predictor that itself is characterized by its upper probability on singletons.  It turns out that there is such a probability model, namely, the consonant plausibility functions mentioned briefly in Section~\ref{SS:ip} and which play an important role in what follows.

\section{Achieving Type-2 validity, I: conformal prediction plus consonance}
\label{S:consonance}

\subsection{Conformal prediction}
\label{SS:conformal}

Here we give a brief introduction to conformal prediction, following the presentations in \citet{Vovk:2005} and \citet{Shafer2007ATO}.  Given $Y^{n+1} = (Y^n, Y_{n+1})$ consisting of the observable $Y^n$ and the yet-to-be-observed $Y_{n+1}$ value, consider the transformation $Y^{n+1} \to T^{n+1}$ defined by the rule 
\begin{equation}
\label{eq:rule}
T_i = \psi_i(Y^{n+1}), \quad i \in \I_{n+1},
\end{equation}
where $\psi_1,\ldots,\psi_{n+1}$ are given by 
\[ \psi_i(y^{n+1}) = \Psi(y_{-i}^{n+1}, y_i), \quad i \in \I_{n+1}, \]
with $y_{-i}^{n+1} = y^{n+1} \setminus \{y_i\}$ and $\Psi: \RR^n \times \RR \to \RR$ a fixed function that is invariant to permutations in its first vector argument.  
The function $\Psi$ is called a {\em non-conformity measure}, and the interpretation is that $\psi_i(y^{n+1})$ is small if and only if $y_i$ agrees with a prediction derived based on the data $y_{-i}^{n+1}$.  The key references above give numerous examples of non-conformity measures; see, also, \citet{leipredictionset} and \citet{hong_martin_2019}.  The basic idea is to define $\psi_i(y^{n+1})$ in such a way that it compares $y_i$ to a suitable summary of $y_{-i}^{n+1}$, for example 
\begin{equation}
\label{eq:mean}
\psi_i(y^{n+1}) = |\text{average}(y_{-i}^{n+1}) - y_i|, \quad i \in \I_{n+1}. 
\end{equation}
When the data consists of a response and covariate pair, similar but more complicated non-conformity measures are often used \citep[e.g.,][]{Shafer2007ATO, wassermanregressionprediction}.  The essential feature is that the mapping $Y^{n+1} \to T^{n+1}$ preserves exchangeability.  

Of course, since the goal is to predict $Y_{n+1}$, its value is not observed, so the above calculations cannot exactly be carried out.  However, the exchangeability-preserving properties of the transformations described above provide a procedure to suitably rank candidate values $\tilde y$ of $Y_{n+1}$ based on the observed $Y^n=y^n$; see Algorithm~\ref{algo:conformal}.  The output of this algorithm is a data-dependent function $\tilde y \mapsto \pi(\tilde y; y^n)$ whose interpretation is as a measure of how plausible is the claim ``$Y_{n+1}=\tilde y$'' based on data $y^n$.  In \citet{Vovk:2005}, this function is referred to as a ``p-value'' (p.~25) or as a ``conformal transducer'' (p.~44), but we prefer the name {\em plausibility contour} for various reasons, one being that its interpretation is clear.  One important role the plausibility contour plays is in the construction of conformal prediction regions.  Indeed, the family of sets defined by 
\begin{equation} 
\label{eq:conformal.plint}
\plint_\alpha(y^n) = \{y_{n+1} \in \YY: \pi(y_{n+1}; y^n) > \alpha\},
\end{equation}
satisfies the prediction coverage probability property \eqref{eq:weak} and, therefore, conformal prediction is Type-1 valid.  This is what \citet{Vovk:2005} call ``(conservatively) valid'' (p.~20).  We show below that there is more that can be done with the plausibility contour. 

\begin{algorithm}[t]
\SetAlgoLined
 initialize: data $y^n$, non-conformity measure $\Psi$, and a grid of $\tilde y$ values\;
 \For{each $\tilde y$ value on the grid}{
  set $y_{n+1}=\tilde y$ and write $y^{n+1} = y^n\cup\{y_{n+1}\}$ \;
  define $T_i = \psi_i(y^{n+1})$ for each $i \in \I_{n+1}$\;
  evaluate $\pi(\tilde y; y^n) = (n+1)^{-1} \sum_{i=1}^{n+1} 1_{T_i \geq T_{n+1}}$\;
 }
 return $\pi(\tilde y; y^n)$ for each $\tilde y$ on the grid.
 \caption{\textbf{Conformal Prediction}}
 \label{algo:conformal}
\end{algorithm}

\subsection{Consonance and Type-2 validity}
\label{SS:consonant}

Most, if not all, proofs of Type-1 prediction validity in the literature are based on the identification of a function $\pi$, taking values in $[0,1]$, such that 
\begin{equation}
\label{eq:pi.1}
\text{$\pi(Y_{n+1}; Y^n)$ is stochastically no smaller than $\unif(0,1)$ under any $\prob$}. 
\end{equation}
\citet[][Cor.~2.9]{Vovk:2005} show that the plausibility contour returned by the conformal prediction algorithm satisfies \eqref{eq:pi.1} and, from this, Type-1 prediction validity follows.  

Towards Type~2 validity, suppose that the function $\pi$ also satisfies
\begin{equation}
\label{eq:pi.2}
\sup_{\tilde y} \pi(\tilde y; y^n) = 1 \quad \text{for all $y^n$}. 
\end{equation}
This property holds quite generally for conformal prediction in continuous-data problems: if $\hat y_n$ is a point at which the minimum of $\tilde y \mapsto \Psi(y^n, \tilde y)$ is achieved, then $\pi(\hat y_n; y^n) = 1$ and \eqref{eq:pi.2} holds.  For discrete-data problems; see the discussion in \citet{cella.martin.discrete}. From \eqref{eq:pi.2}, one can readily define an upper prediction probability 
\begin{equation}
\label{eq:consonance}
\uPi_{y^n}(A) = \sup_{\tilde y \in A} \pi(\tilde y; y^n), \quad A \subseteq \YY. 
\end{equation}
This upper probability is a {\em consonant plausibility function} 
or, equivalently, a {\em possibility measure}, 
as discussed in Section~\ref{SS:ip}.  The plausibility contour $\pi$ fully determines the upper and lower prediction probabilities, through \eqref{eq:consonance}.  

From a practical point of view, adding consonance to conformal prediction creates no new computational challenges.  The standard use of Algorithm~\ref{algo:conformal}'s output is to extract the prediction region, $\plint_\alpha(y^n)$, which is just the collection of all $\tilde y$ such that $\pi(\tilde y; y^n)$ exceeds $\alpha$.  With the addition of consonance, we recommend two additional summaries.  First, at least in low-dimensional problems, a plot of $\tilde y \mapsto \pi(\tilde y; y^n)$ provides a nice visual assessment of the information available in the data $y^n$ regarding $Y_{n+1}$, similar to the Bayesian posterior predictive density function; see Section~\ref{S:examples}. 
Second, for any assertion $A \subset \YY$, the prediction upper probability at $A$ can be approximated as 
\[ \uPi_{y^n}(A) \approx \max_{\text{$\tilde y$ on the grid and in $A$}} \pi(\tilde y; y^n). \]
Consonance also induces several practically relevant properties.  First, consonance implies that the probabilistic predictor is {\em coherent} in the sense of \citet{walley1991}; see Proposition~7.14 in \citet{lower.previsions.book}.  Moreover, it turns out that consonance is crucial to establishing Type-2 validity.

\begin{thm}
\label{thm:strong}
The probabilistic predictor defined by the consonant plausibility function in \eqref{eq:consonance}, determined by a contour $\pi$ satisfying \eqref{eq:pi.1} and \eqref{eq:pi.2}, achieves strong Type-2 validity in Definition~\ref{def:type3}. 
\end{thm}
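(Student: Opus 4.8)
The plan is to reduce strong validity to the stochastic-dominance property \eqref{eq:pi.1} already used for weak validity, with consonance supplying the crucial link. Fix an arbitrary tuple $(\alpha, n, A, \prob)$. The heart of the argument is the observation that the consonant structure in \eqref{eq:consonance} forces $\uPi_{Y^n}(A)$ to dominate the contour evaluated at the true future value whenever that value lands in $A$: on the event $\{Y_{n+1} \in A\}$, the point $Y_{n+1}$ is itself one of the $\tilde y \in A$ over which the supremum in \eqref{eq:consonance} is taken, so
\[ \uPi_{Y^n}(A) = \sup_{\tilde y \in A} \pi(\tilde y; Y^n) \geq \pi(Y_{n+1}; Y^n). \]

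First I would use this pointwise bound to control the joint event. Since $\uPi_{Y^n}(A) \geq \pi(Y_{n+1}; Y^n)$ on $\{Y_{n+1} \in A\}$, the event $\{\uPi_{Y^n}(A) \leq \alpha\} \cap \{Y_{n+1} \in A\}$ is contained in $\{\pi(Y_{n+1}; Y^n) \leq \alpha\}$, and monotonicity of probability gives
\[ \prob\{\uPi_{Y^n}(A) \leq \alpha, \, Y_{n+1} \in A\} \leq \prob\{\pi(Y_{n+1}; Y^n) \leq \alpha\}. \]
Next I would invoke \eqref{eq:pi.1}: because $\pi(Y_{n+1}; Y^n)$ is stochastically no smaller than $\unif(0,1)$ under $\prob$, the right-hand side is at most $\alpha$. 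Combining the two displays yields exactly the inequality in Definition~\ref{def:strong}, and since $(\alpha, n, A, \prob)$ were arbitrary, strong validity follows.

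I expect no serious obstacle here; the real content is in correctly identifying which hypothesis does the work. The role of \eqref{eq:pi.2} is not in the inequality chain but in guaranteeing that \eqref{eq:consonance} defines a bona fide (normalized) consonant plausibility function, i.e.\ $\uPi_{Y^n}(\YY) = 1$, so that $(\lPi_{Y^n}, \uPi_{Y^n})$ is a legitimate probabilistic predictor and the dual lower-probability statement discussed after Definition~\ref{def:strong} is available. The substantive hypothesis is \eqref{eq:pi.1} together with consonance: a generic, non-consonant upper probability need not satisfy $\uPi_{Y^n}(A) \geq \pi(Y_{n+1}; Y^n)$ on $\{Y_{n+1} \in A\}$, so the reduction would break without the supremum representation. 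The only place where genuine care is required is the discrete, small-$n$ regime flagged in the remarks after Definition~\ref{def:strong}: there \eqref{eq:pi.1} holds only up to the monotone correction $k_n$, and the identical argument then delivers the bound $k_n(\alpha)$ in place of $\alpha$, matching the $k_n(\alpha)$ version of the strong validity inequality.
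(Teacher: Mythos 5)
Your proof is correct and is essentially identical to the paper's: both arguments use the supremum in \eqref{eq:consonance} to get $\uPi_{Y^n}(A) \geq \pi(Y_{n+1}; Y^n)$ on $\{Y_{n+1} \in A\}$, deduce the event inclusion $\{\uPi_{Y^n}(A) \leq \alpha,\, Y_{n+1} \in A\} \subseteq \{\pi(Y_{n+1}; Y^n) \leq \alpha\}$, and then apply \eqref{eq:pi.1}. Your added remarks on the normalization role of \eqref{eq:pi.2} and the $k_n(\alpha)$ correction are consistent with the paper's surrounding discussion.
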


\begin{proof}
Combine \eqref{eq:pi.1} with Proposition~\ref{prop:type3}.
\end{proof}


To our knowledge, the strong Type-2 prediction validity property of conformal prediction is new.  By converting the output of the conformal prediction algorithm into a consonant plausibility function one achieves a stronger validity property beyond coverage probability of prediction regions.  The stronger validity property is valuable because it ensures that errors in prediction---{\em for all possible assertions about $Y_{n+1}$}---are uniformly controllably rare. 



In a recent paper, \citet{Vovk2018NonparametricPD}  
developed a nonparametric predictive probability distribution based on conformal prediction. Roughly speaking, by using a suitably monotone non-conformity measure, they obtain a similarly monotone conformal transducer, which is then interpreted as a predictive probability distribution for $Y_{n+1}$, given $y^n$.  The motivation behind this interpretation is that ``a conformal predictive distribution contains more information'' \citep[][p.~472]{Vovk2018NonparametricPD} than conformal prediction intervals.  It is true that a probability distribution can be used to calculate probabilities for arbitrary assertions $A$ about $Y_{n+1}$.  However, based on the discussion above and in Section~\ref{SS:type3}, the conformal predictive distribution is not strong Type-2 valid, hence, interpretation of those probabilities is difficult and their use may lead to erroneous predictions.  Indeed, while the conformal predictive distribution function evaluated at $Y_{n+1}$ is calibrated, the probability assigned to intervals $A$ that contain $Y_{n+1}$---i.e., the difference of the distribution function evaluated at the two endpoints---could be much smaller than this, leading the user to systematically ``reject'' such an assertion even though it is true.  Coupling the conformal prediction algorithm's output with consonance and the plausibility calculus as described in \citet{shafer1976mathematical}, however, leads to the strong Type-2 validity property, which ensures the (lower and upper) prediction probabilities are uniformly calibrated and, therefore, practically meaningful in the sense of Cournot.  


\section{Achieving Type-2 validity, II: a new class of nonparametric inferential models}
\label{S:im}

\subsection{Objectives and background}

The previous section showed that by interpreting the conformal prediction output as the contour function that defines a full consonant plausibility function, a Type-2 validity property can be achieved, one that goes beyond coverage probabilities of prediction regions.  This connection between conformal prediction and consonant plausibility functions suggests that insights about conformal prediction can be gained from the perspective of imprecise probabilities---in particular, random sets, possibility measures, etc.  In this section, we show that there is an alternative route to ``conformal + consonance'' as described above, through the IM framework, due to \citet{mainMartin, martinbook}, based on distributions of random sets.  This connection to IMs provides a new, imprecise probability-based characterization of conformal prediction.  

The IM approach has close connections to fiducial inference \citep{fisherfiducial, taraldsen.lindqvist.2013}, generalized fiducial inference \citep{MainHaning}, structural inference \citep{fraser1968structure}, Dempster--Shafer theory \citep{dempster.copss, dempster1967, dempster1968a, DEMPSTER2008365, shafer1976mathematical}, and others, including Bayesian inference \citep[e.g.,][Remark~4]{condmartin}, confidence structures \citep{balch2012}, and other calibrated beliefs frameworks \citep{denoeux.li.2018, denoeux2014}. Its construction proceeds as follows.  Step~1 is to {\em associate} the observable data and unknown quantity of interest with an unobservable auxiliary variable.  This association usually characterizes the distribution of data, given the unknowns, but see below.  Step~2 is to {\em predict} the unobserved value of the auxiliary variable with a suitable, user-defined random set.  Finally, Step~3 is to map this random set to the space where the quantity of interest resides and then {\em combine} it with the association at the observed data value.  This yields a data-dependent random set whose distribution is used to quantify uncertainty about the unknowns, in particular, yielding lower and upper probabilities for any relevant assertions.  Easy to arrange properties of the user-defined random set ensure that the inference is valid in a sense similar to that described in Section~\ref{SS:strong} above.  For the readers' convenience, we present some details in Appendix~\ref{S:impred.old} about IMs for prediction under a parametric statistical model.  

Like Bayes, fiducial, and other approaches to statistical inference, the IM framework is {\em model-based}.  This is a significant obstacle because, for prediction, the goal is to make as few model assumptions as possible.  Here we construct a valid prediction IM assuming only exchangeability, borrowing on some key insights developed in \citet{martin2015,MARTIN2018105}.  That is, that the association---see the A-step below---need not  characterize the data-generating process. Instead, an association can be built using suitable functions of the observable data.  Below, these ``functions of the observable data'' are closely related to the non-conformity scores from conformal prediction. 

Connections between our IM-based construction of imprecise probabilities for prediction can be made with other more classical approaches, including {\em nonparametric predictive inference} as presented in \citet{AUGUSTIN2004251}, \citet{CoolenBayes}, \citet{augustin.etal.bookchapter}, and the references therein.  Some relevant comments about these connections are given in Appendix~\ref{S:classical}.

\subsection{Construction}
\label{SS:construction}

\subsubsection{A-step}
\label{SSS:new.Astep}

According to \citet{martin2015,MARTIN2018105}, the association between the observable data, quantity of interest, and unobservable auxiliary variables can be generalized, in particular, it need not fully characterize the data-generating process.  That is, certain non-invertible transformations can be considered and, as long as those transformations preserve the exchangeability---our only model assumption---in the observable data, then we can construct a valid IM for predicting $Y_{n+1}$ based on $Y^n$.   

Recall the setup in Section~\ref{SS:conformal} above, wherein we make a transformation from $Y^{n+1}$ to $T^{n+1}$, where $T^{n+1}=(T_1,\ldots,T_{n+1})$, with
\[ T_i = \psi_i(Y^{n+1}) = \Psi(Y_{-i}^{n+1}, Y_i), \quad i \in \I_{n+1}, \]
and $\Psi$ a user-specified non-conformity measure.  
The crucial feature is that, by the symmetry of $\Psi$, the image $T^{n+1}$ inherits the exchangeability of $Y^{n+1}$.  Where necessary in what follows, we will highlight $T_i$'s dependence on the data $Y^{n+1}$ by writing it as $T_i(Y^{n+1})$.

The joint distribution of $T^{n+1}$ is complicated.  A complete characterization requires a common marginal distribution function $G$ and, say, a copula that induces the exchangeable dependence structure.  In particular, marginally we have 
\begin{equation}
\label{eq:generalized.marginal}
 T_i = G^{-1}(U_i), \quad i \in \I_{n+1},  
\end{equation}
where the $U_i$'s are iid $\unif(0,1)$ and $G$ is an infinite-dimensional nuisance parameter.  Our goals are, first, to avoid directly dealing with $G$ (and the associated exchangeability-inducing copula) and, second, since we aim to predict the single data point $Y_{n+1}$, to reduce the dimension so that specifying a random set for the complicated and relatively high-dimensional $U^{n+1}$ is unnecessary.  

For simplicity, suppose for the moment that the $T_i$'s are continuous.  This would hold, e.g., if the $Y_i$'s are continuous and the non-conformity measure $\Psi$ is non-constant on sets of $Y^{n+1}$ with positive $\prob$-probability. The key observation is that, since $G$ is strictly increasing, the ranks of the $T_i$'s are well-defined (i.e., no ties) and the same as those of the $U_i$'s.  Moreover, exchangeability implies that the latter ranks are marginally discrete uniform on $\I_{n+1}$, denoted by $\unif(\I_{n+1})$.  This creates an opportunity to both eliminate the dependence on $G$ (and the copula) and reduce the dimension.  The quantity $T_{n+1}$ gives the to-be-predicted value $Y_{n+1}$ a special status and, as we just pointed out, its rank is uniformly distributed.  This suggests a dimension-reduced (generalized) association,
\begin{equation}
\label{eq:conformal.assoc}
r(T_{n+1}) = V, \quad V \sim \unif(\I_{n+1}), 
\end{equation}
where $r(\cdot)$ is the ranking operator, that depends implicitly on $T^{n+1}$ and, hence, on $Y^{n+1}$.  Here we assign rank 1 to the smallest value, rank 2 to the second smallest, and so on, because small values of the non-conformity measure are ``better'' in a certain sense.  That the values are ranked in ascending versus descending order will be important in the P-step below.  For now, we have completed the A-step: \eqref{eq:conformal.assoc} defines generalized association---in the sense of \citet{martin2015,MARTIN2018105}---that links the data $Y^n$ and the to-be-predicted value $Y_{n+1}$ to an auxiliary variable $V$ with known distribution.  

On the other hand, if the $T_i$'s are not continuous, then a different argument is required.  The problem is that, in this case, the marginal distribution function $G$ is not strictly increasing, which means the $T_i$'s can have ties, hence the ranks do not correspond to the ranks of the $U_i$'s in \eqref{eq:generalized.marginal}.  One can still make the reduction to the lower-dimensional feature $r(T_{n+1})$ as above, but its distribution is no longer completely known---some features of $G$ remain.  Fortunately, there is a simple remedy for this based on the following observation: when ties are possible, $r(T_{n+1})$ is stochastically no larger than when ties are not possible.  Therefore, following the logic in \citet[][Sec.~5]{marginalmartin}, the aforementioned association \eqref{eq:conformal.assoc} can still be used to construct a valid IM for predicting $Y_{n+1}$ as described below.  We sacrifice a bit of efficiency---compared to the exact/continuous case above---in order to eliminate the dependence on the nuisance parameter $G$.



\subsubsection{P-step}
\label{sss:p-step}

Towards valid prediction of $Y_{n+1}$, here our intermediate goal is to specify a random set targeting the unobserved realization of the auxiliary variable $V$ introduced above.  The existing literature on this has focused exclusively on cases where the auxiliary variable being targeted has a continuous distribution.  We can still follow the developments in \citet{martinbook}, but there are some differences in our present case of discrete $V$.  
Let $\S \sim \prob_\S$ denote a random set taking values on the power set of $\I_{n+1}$; this is a finite space, so the distribution $\prob_\S$ of the random set can be characterized simply by a mass function.  The only feature of $\prob_\S$ needed here is the contour function of $\S$, given by 
\begin{equation}
\label{eq:contourfunction}
\gamma_\S(v) := \prob_\S(\S \ni v), \quad v \in \I_{n+1}. 
\end{equation}
It will be important in what follows that the distribution of $\gamma_\S(V)$, as a function of $V \sim \unif(\I_{n+1})$ is as close to uniform as possible; see \eqref{eq:S.valid} and Lemma~\ref{lem:S.valid}.  This boils down to making a suitable choice of $\S$ targeting the unobserved value of $V$.

Towards this, define the random set 
\begin{equation}
\label{eq:S}
\S = \{1,2,\ldots,\tilde V\}, \quad \tilde V \sim \unif(\I_{n+1}),
\end{equation}
which is the push-forward of the $\unif(\I_{n+1})$ distribution through the set-valued mapping $v \mapsto \I_v$.  This random set makes intuitive sense because, since the ranking operator is relative to ascending order, so that ``rank equals 1'' corresponds to a prediction consistent with the observed data, the random set $\S$ should include value 1.  Furthermore, $\S$ in \eqref{eq:S} is {\em valid} for predicting an auxiliary variable $V \sim \unif(\I_{n+1})$ in the sense that 
\begin{equation}
\label{eq:S.valid}
\prob_V\{\gamma_\S(V) \leq \alpha\} \leq \alpha , \quad \alpha \in [0,1].
\end{equation}
In words, $\S$ is valid if $\gamma_\S(V)$, as a function of $V \sim \unif(\I_{n+1})$, is stochastically no smaller than $\unif(0,1)$. 





\begin{lem}
\label{lem:S.valid}
The random set $\S$ defined in \eqref{eq:S} is valid in the sense of \eqref{eq:S.valid}.  
\end{lem}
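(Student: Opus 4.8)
The plan is to reduce everything to a finite counting argument on $\I_{n+1}$: I would compute the contour function $\gamma_\S$ in closed form, identify the exact sampling distribution of $\gamma_\S(V)$ under $V \sim \unif(\I_{n+1})$, and then verify \eqref{eq:S.valid} by direct substitution. First, from \eqref{eq:contourfunction} and the definition \eqref{eq:S} of $\S$, I note that $\S = \{1,\ldots,\tilde V\}$ contains a given $v \in \I_{n+1}$ exactly when $\tilde V \geq v$, so
\[ \gamma_\S(v) = \prob_\S(\S \ni v) = \prob(\tilde V \geq v) = \frac{n+2-v}{n+1}, \qquad v \in \I_{n+1}, \]
because $\tilde V \sim \unif(\I_{n+1})$ puts mass $(n+1)^{-1}$ on each of the $n+2-v$ integers $v, v+1, \ldots, n+1$.

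Next I would push this through the auxiliary variable. As $v$ ranges over $\I_{n+1}$, the map $v \mapsto (n+2-v)/(n+1)$ is a bijection onto the grid $(n+1)^{-1}\I_{n+1} = \{(n+1)^{-1}, 2(n+1)^{-1}, \ldots, 1\}$, so $\gamma_\S(V)$ is itself distributed as $\unif\{(n+1)^{-1}\I_{n+1}\}$ when $V \sim \unif(\I_{n+1})$. In particular, for any $t \in [0,1)$,
\[ \prob_V\{\gamma_\S(V) \leq t\} = \frac{\#\{j \in \I_{n+1}: j \leq (n+1)t\}}{n+1} = \frac{\lfloor (n+1)t \rfloor}{n+1}. \]

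Finally, I would set $t = k_n(\alpha) = (n+1)^{-1}\lfloor(n+1)\alpha\rfloor$. Since $(n+1)k_n(\alpha) = \lfloor(n+1)\alpha\rfloor$ is an integer, the floor in the displayed CDF has nothing to remove, and I obtain $\prob_V\{\gamma_\S(V) \leq k_n(\alpha)\} = k_n(\alpha)$; the elementary bound $k_n(\alpha) \leq \alpha$ then gives \eqref{eq:S.valid}. The boundary case $k_n(\alpha) = 1$ (equivalently $\alpha = 1$) is trivial, since every probability is at most $1$.

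I do not expect any genuine obstacle here: the statement is a finite counting identity and the computation above is elementary. The only point deserving care is the discreteness bookkeeping---namely, confirming that the floor built into the calibration constant $k_n$ of \eqref{eq:k} matches the floor produced by the grid-valued CDF, so that the evaluation $\prob_V\{\gamma_\S(V) \leq k_n(\alpha)\} = k_n(\alpha)$ is exact rather than merely an upper estimate. The underlying structural fact making all of this work is that $\gamma_\S(V)$ is not merely stochastically but \emph{exactly} distributed as $\unif\{(n+1)^{-1}\I_{n+1}\}$, so the validity inequality is essentially an identity, with the slack $\alpha - k_n(\alpha) \geq 0$ coming entirely from rounding to the grid.
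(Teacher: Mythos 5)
Your proposal is correct and follows essentially the same route as the paper: compute $\gamma_\S(v) = 1 - \tfrac{v-1}{n+1} = \tfrac{n+2-v}{n+1}$ and observe that, as a function of $V \sim \unif(\I_{n+1})$, it is exactly $\unif\{(n+1)^{-1}\I_{n+1}\}$ distributed, from which \eqref{eq:S.valid} follows. The only difference is that you spell out the CDF and floor-function bookkeeping that the paper compresses into ``the claim follows immediately,'' which is a fair (and arguably helpful) elaboration rather than a different argument.
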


\begin{proof}
By direct calculation, we have 
\[ \gamma_\S(v) = \prob_\S(\S \ni v) = \prob_{\tilde V}(\tilde V \geq v) = 1- \tfrac{v-1}{n+1}, \quad v \in \I_{n+1}. \]
Since $\gamma_\S(V)$ is a linear function of $V \sim \unif(\I_{n+1})$, its exact distribution is $\unif\{(n+1)^{-1}\I_{n+1}\}$, which is stochastically no smaller than $\unif(0,1)$, proving the claim.
\end{proof}

The proof of Lemma~\ref{lem:S.valid} shows that $\gamma_\S(V)$ is exactly $\unif\{(n+1)^{-1}\I_{n+1}\}$ distributed, not just stochastically no smaller.  This is an important characteristic because it implies that the set $\S$ in \eqref{eq:S} is efficient in the following sense.  First note that the constant random set $\S \equiv \I_{n+1}$ is valid but, because it is the full $V$-space, it cannot provide any valuable information.  So, at least intuitively, we seek the ``smallest'' random set that is valid.  If we measure the ``size'' of a random set $\S$ via its covering function, $\gamma_\S$, and if $\S'$ is another random set that is smaller than $\S$ in the sense that 
\[ \gamma_{\S'}(v) < \gamma_\S(v), \quad \text{for some $v \in \I_{n+1}$},\]
then it is easy to check that $\S'$ cannot satisfy \eqref{eq:S.valid}.  Therefore, we complete the P-step by recommending use of the random set $\S$ in \eqref{eq:S} targeting the unobserved auxiliary variable $V$ in the A-step from Section~\ref{SSS:new.Astep} since it is both valid and {\em efficient}.

\subsubsection{C-step}
\label{ss:cstep}
Next we combine the results of the A- and P-steps to construct a new random set on $\YY$ whose distribution determines a probabilistic predictor for quantifying uncertainty about $Y_{n+1}$.  If we express the result of the A-step with the $v$-indexed collection of sets 
\begin{equation}
\label{eq:v-indexed}
\YY_{y^n}(v) = \{y_{n+1}: r(T_{n+1}(y^{n+1})) = v\}, \quad v \in \I_{n+1}, 
\end{equation}
and then combine this with the random set $\S$ in \eqref{eq:S} from the P-step above, then the C-step yields the new random set on $\YY$, given by 
\[ \YY_{y^n}(\S) = \{y_{n+1}: r(T_{n+1}(y^{n+1})) \leq \tilde V\}, \quad \tilde V \sim \unif(\I_{n+1}). \]
Following the general random set theory \citep[e.g.,][]{nguyen2006introduction, molchanov2005}, define the contour---or covering probability---function of the random set $\YY_{y^n}(\S)$, which is determined by the distribution of $\S$, as 
\begin{align}
\label{eq:plausonedim}
\pi(y_{n+1}; y^n) & = \prob_\S\{\YY_{y^n}(\S) \ni y_{n+1}\} \nonumber \\
& = \prob_{\tilde{V}}\{\tilde{V} \geq r(T_{n+1}(y^{n+1}))\} \nonumber \\
& = \frac{1}{n+1} \sum_{i=1}^{n+1} 1_{T_i(y^{n+1}) \geq T_{n+1}(y^{n+1})}. 
\end{align}
We immediately recognize the right-hand side above as conformal prediction's plausibility contour as in Algorithm~\ref{algo:conformal}.  This establishes the connection between conformal prediction and the new class of nonparametric IMs.  Moreover, since the random set $\S$ is nested, its data-dependent image $\YY_{y^n}(\S)$ on the $\YY$ space is nested too, and the distribution of a nested random set corresponds to a consonant plausibility function.  Technically, 
\[ \uPi_{y^n}(A) := \prob_\S\{\YY_{y^n}(\S) \cap A \neq \varnothing\} = \sup_{\tilde y \in A} \pi(\tilde y; y^n), \quad A \subseteq \YY. \]
Consequently, our recommendation in Section~\ref{S:consonance} to construct a consonant plausibility function from the conformal prediction algorithm's output was not an ad hoc adjustment to achieve strong validity.  Rather, as the above derivation suggests, the connection between conformal prediction, consonant plausiblity functions, and IMs is fundamental.

\subsection{Strong Type-2 validity}
\label{SS:validity}

Of course, since the IM's consonant plausibility function exactly matches that from conformal prediction, the same strong Type-2 prediction validity property established in Section~\ref{S:consonance} must hold here too.  Here we give a direct proof---following the general theory of IMs, as in \citet{martinbook}---to showcase the important role played by the nested random set $\S$ in the IM construction.  

\begin{thm}
\label{thm:valid}
The probabilistic predictor defined by the consonant plausibility function derived above, using a random set $\S$ satisfying \eqref{eq:S.valid}, is strongly Type-2 valid for prediction in the sense of Definition~\ref{def:type3}.   
\end{thm}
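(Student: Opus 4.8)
The plan is to derive strong validity directly from the validity of the nested random set $\S$, rather than re-using the conformal identity behind Theorem~\ref{thm:strong}, so as to showcase the role that $\S$ plays. The starting point is the C-step identity \eqref{eq:plausonedim}, which I would first rewrite as
\[ \pi(Y_{n+1}; Y^n) = \gamma_\S\bigl(r(T_{n+1}(Y^{n+1}))\bigr), \]
because $\prob_{\tilde V}\{\tilde V \ge r\}$ is exactly the contour $\gamma_\S(r) = 1-\tfrac{r-1}{n+1}$ computed in the proof of Lemma~\ref{lem:S.valid}. This reduces the distributional analysis of the whole contour $\pi$ to that of the single auxiliary rank $V := r(T_{n+1}(Y^{n+1}))$ from the A-step.

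Next I would verify the A-step association \eqref{eq:conformal.assoc}: under any exchangeable $\prob$, the rank $V$ is marginally $\unif(\I_{n+1})$. The crux is that the permutation-invariance of $\Psi$ in its first argument forces $T^{n+1}$ to inherit the exchangeability of $Y^{n+1}$, so $T_{n+1}$ is equally likely to occupy any of the $n+1$ rank positions. Combining this with Lemma~\ref{lem:S.valid} gives
\[ \prob\{\pi(Y_{n+1}; Y^n) \le \alpha\} = \prob_V\{\gamma_\S(V) \le \alpha\} = k_n(\alpha) \le \alpha, \]
where the middle equality uses that $\gamma_\S(V)$ is $\unif\{(n+1)^{-1}\I_{n+1}\}$-distributed and the last inequality is $\lfloor(n+1)\alpha\rfloor \le (n+1)\alpha$. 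In particular the contour satisfies the stochastic-dominance property \eqref{eq:pi.1}, in fact with $\alpha$ itself on the right rather than merely $k_n(\alpha)$.

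Finally I would invoke consonance exactly as in the proof of Theorem~\ref{thm:strong}. Since $\uPi_{y^n}(A) = \sup_{\tilde y \in A}\pi(\tilde y; y^n)$, whenever $Y_{n+1} \in A$ we have $\uPi_{Y^n}(A) \ge \pi(Y_{n+1}; Y^n)$, which yields the inclusion
\[ \{\uPi_{Y^n}(A) \le \alpha,\ Y_{n+1} \in A\} \subseteq \{\pi(Y_{n+1}; Y^n) \le \alpha\}, \]
whose probability is bounded by $\alpha$ by the previous display. This establishes \eqref{eq:strong} for all $(\alpha, n, A, \prob)$ and completes the argument.

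The step I expect to be the main obstacle is the marginal uniformity of $V$, and specifically the treatment of ties among the $T_i$'s. For continuous $Y_i$ and a generic non-conformity measure $\Psi$, ties have probability zero and $V$ is exactly $\unif(\I_{n+1})$; in the general (possibly tied) case, the ascending ``$\ge$'' convention built into \eqref{eq:plausonedim} guarantees that $\gamma_\S(V)$ remains stochastically no smaller than $\unif\{(n+1)^{-1}\I_{n+1}\}$, so the bound $\prob\{\pi \le \alpha\} \le k_n(\alpha)$ survives as an inequality, which is all that is needed. This is also precisely the point at which the $k_n(\alpha)$ modification flagged after Definition~\ref{def:strong} would enter, should discreteness make it necessary.
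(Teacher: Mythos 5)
Your proposal is correct and follows essentially the same route as the paper's proof: both reduce $\pi(Y_{n+1};Y^n)$ to $\gamma_\S(V)$ with $V := r(T_{n+1}(Y^{n+1})) \sim \unif(\I_{n+1})$ via the A- and C-steps, invoke Lemma~\ref{lem:S.valid} (the paper phrases the bound as $\prob\{\pi \leq k_n(\alpha)\} \leq \alpha$, you as the equivalent $\prob\{\pi \leq \alpha\} = k_n(\alpha) \leq \alpha$), and then conclude by the consonance inclusion from Theorem~\ref{thm:strong}, which the paper leaves implicit. Your explicit treatment of ties and of the A-step uniformity is a minor addition in care, not a different argument.
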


\begin{proof}
We proceed by checking condition \eqref{eq:pi.1} for the plausibility contour $\pi(Y_{n+1}; Y^n)$ defined by the left-hand side of \eqref{eq:plausonedim}.  Towards this, for a realization of the random set $\S$ in \eqref{eq:S}, define the random variable $\overline{\S} = \max\S$, the set's upper bound.
By definition of the plausibility contour, we have
\begin{align*}
\prob\{\pi(Y_{n+1}; Y^n)\leq \alpha\} &= \prob\bigl[\prob_{\S}\{ \YY_{Y^n}(\S) \ni Y_{n+1} \}\leq \alpha\bigr] \\
&= \prob\bigl[\prob_{\S}\{ r(T_{n+1}(Y^{n+1})) \leq \overline{\S} \}\leq \alpha \bigr] \\
&= \prob_{V}\bigl\{\prob_{\S}(V \leq \overline{\S})\leq \alpha \bigr\} \\
&= \prob_{V}\{\gamma_{\S}(V)\leq \alpha\}.
\end{align*}  
Then the claim follows from Proposition~\ref{prop:type3}. 
\end{proof}



In Section~\ref{sss:p-step} we argued that the random set $\S$ in \eqref{eq:S} is efficient in the sense that smaller random sets would not be valid.  However, $\S$ itself is also inefficient in the sense that the corresponding prediction plausibility regions are conservative: their coverage probability generally exceeds the nominal level.  To improve the efficiency, we need to take a smaller random set but, unfortunately, the discreteness of the problem limits our flexibility.  There is no random set that is both valid and ``strictly smaller'' than $\S$, but we can reduce the size by introducing some extra {\em randomization}, as described below.  

Consider a family of random sets $\S^w$, indexed by $w \in [0,1]$, given by:
\begin{align}\label{Sw}
\S^w = \begin{cases} \{1,\ldots,\tilde V\} & \text{if $\xi=0$} \\ \{1,\ldots,\tilde V - 1\} & \text{if $\xi=1$ and $\tilde V > 1$} \\ \varnothing & \text{if $\xi=1$ and $\tilde V=1$}, \end{cases} \qquad (\tilde V,\xi) \sim \unif(\I_{n+1}) \times \ber(w),
\end{align}
where $\ber(w)$ denotes a Bernoulli distribution with success probability $w$. In words, $\S^w$ flips a $w$-coin to decide between $\S=\{1,\ldots,\tilde V\}$ in \eqref{eq:S} and the smaller $\{1,\ldots,\tilde V-1\}$, both using the same $\tilde V$.  With fixed $w$, it is easy to check that 
\[ \gamma_{\S^w}(v) = \prob_{\S^w}(\S^w \ni v) = 1-(n+1)^{-1}(v-w). \]
A relatively simple calculation reveals that, if $w$ is treated as a uniformly distributed random variable $W$, i.e., if $(V,W) \sim \unif(\I_{n+1}) \times \unif(0,1)$, then 
\begin{equation}
\label{eq:vw}
(V,W) \mapsto \gamma_{\S^W}(V) \sim \unif(0,1). 
\end{equation}
So if we couple the data sequence $Y_1,Y_2,\ldots$ with a corresponding sequence $W_1,W_2,\ldots$ of independent $\unif(0,1)$ random variables that we generate, then we can proceed by constructing an IM for predicting $Y_{n+1}$ by using the random set $\S^{w_{n+1}}$ corresponding to the observed value $w_{n+1}$ of $W_{n+1}$.  This produces a $w_{n+1}$-dependent plausibility contour 
\begin{align}\label{eq:randplaus}
\pi^{w_{n+1}}(y_{n+1}; y^n) & = \prob_{\S^{w_{n+1}}}\{\YY_{y^n}(\S^{w_{n+1}}) \ni y_{n+1}\} \nonumber \\
& = \frac{1}{n+1} \sum_{i=1}^{n+1} 1_{T_i(y^{n+1}) > T_{n+1}(y^{n+1})}  \nonumber \\
& \qquad + \frac{w_{n+1}}{n+1} \sum_{i=1}^{n+1} 1_{T_i(y^{n+1}) = T_{n+1}(y^{n+1})}. 
\end{align}
It follows immediately from \eqref{eq:vw} that
\begin{equation}
\label{eq:randval}
\prob\{\pi^{W_{n+1}}(Y_{n+1}; Y^n) \leq \alpha\}  = \alpha \quad \text{for all $n$ and all $\alpha \in [0,1]$}, 
\end{equation}
with the probability taken over $(Y^{n+1}, W_{n+1})$ now, which implies that exact coverage probability for the corresponding {\em randomized} prediction plausibility region.  Strong Type-2 validity holds as well.  One will also immediately recognize $\pi^{w_{n+1}}(y_{n+1}; y^n)$ as conformal prediction's ``smoothed p-value'' in, e.g., Equation~(2.20) of \citet{Vovk:2005}.  

The downside to this exact prediction framework is two-fold: first, as a result of the $W$-dependence, two data analysts could produce different prediction intervals with the same $Y_1,Y_2,\ldots$ data sequence; second, it requires inappropriate use of a random set taking value $\varnothing$ with non-zero probability.  To us, both of these are legitimate concerns, so we include this exact prediction validity result here only for theoretical interest.

\subsection{Optimality}
\label{ss:Optimality}

\citet{martin2017mathematical} established a connection between valid IMs and confidence regions.  He shows that, given a confidence region for some feature $\phi = \phi(\theta)$ of the full parameter, there exists a valid inferential model for $\theta$ whose corresponding
marginal plausibility region for $\phi$ is at least as efficient as the given confidence region.  Given the flexibility of the nonparametric IMs for prediction presented above, one may wonder if a similar result is possible for prediction intervals based solely on an exchangeability assumption. 

\citet{Vovk:2005} and \citet{Shafer2007ATO} present an optimality result in the context of conformal prediction. Roughly, given any valid and nested prediction regions that are invariant with respect to the
permutations of $y^n$, there exists a non-conformity measure such that the corresponding conformal prediction region derived from it is at least as efficient. More precisely, if $\mathscr{C}_\alpha(y^n)$ represents a valid and nested $100(1-\alpha)$\% prediction region for each $\alpha \in [0,1]$, then there exists a nonconformity measure---determined by a mapping $\Psi$---whose corresponding $100(1-\alpha)$\% conformal prediction region $\plint_\alpha(y^n) = \plint_\alpha(y^n; \Psi)$ satisfies 
\[ \plint_\alpha(y^n) \subseteq \mathscr{C}_\alpha(y^n), \quad \text{for all $y^n$ and all $\alpha \in [0,1]$}. \]
The function $\Psi$ can even be identified explicitly, i.e., 
\[ \Psi(\tilde y^n; \tilde y) = \sup\{\alpha \in [0,1]: \mathscr{C}_\alpha(\tilde y^n) \not\ni \tilde y\}. \]
We showed above that every conformal prediction region is a valid IM's plausibility region which, together with Shafer and Vovk's argument, proves our present claim.  That is, given any valid, nested, and permutation invariant family of prediction regions, there exists a valid IM whose prediction plausibility regions are at least as efficient.

\section{Examples}
\label{S:examples}

This section presents several numerical examples intended to highlight two things.  First, as suggested by the ``conformal + consonance'' characterization in Section~\ref{S:consonance}, the importance of the conformal p-value/tranducer as output.  Indeed, as described there, this function can be plotted for a visual assessment of which values of the future observable are most plausible based on the observed data.  Using the plausibility calculus, one can get a rough estimate of the plausibility assigned to any assertion just by looking at this plot, at least in one-dimensional problems.  Second, to highlight the new IM-based characterization in Section~\ref{S:im}, we frame our presentation here using that terminology.

\subsection{One-dimensional prediction}
\label{ss:one}

Consider a random sample of size $n=50$ of some scalar random variable $Y$; a histogram of these data is shown in Figure~\ref{fig:histogram}(a). The goal is to construct a valid nonparametric IM for prediction of $Y_{51}$.

Figure \ref{fig:histogram}(b) shows, in red, the plausibility contour in \eqref{eq:plausonedim} with
\begin{equation}\label{eq:median}
\psi_i(y^{n+1}) = |\text{median}(y_{-i}^{n+1}) - y_i|, \quad i \in \I_{n+1}, \quad n=50.  
\end{equation}
This plausbility contour has its mode at the sample median and is symmetric.  For comparison, we also consider a different non-conformity measure, namely,
\[
\psi_i(y^{n+1}) = y_i, \quad i \in \I_{n+1}, \quad n=50.  
\]
Note that the structure of this function $\psi_i$ is inconsistent with the choice of the random set in \eqref{eq:S} suggested in Section~\ref{S:im}.  But it turns out that this choice of non-conformity measure has close connections to classical nonparametric predictive inference and the textbook order statistics-based prediction intervals \citep[e.g.,][]{wilks1941}, so we give a complete description of the IM formulation in this case in Appendix~\ref{S:classical}. The resulting plausibility contour, as given in \eqref{eq:twosided}, is shown, in blue, in Figure~\ref{fig:histogram}(b). Note that, by thresholding it at 0.05 we obtain the 95\% classic prediction interval $[y_{(1)},y_{(50)}]$. Moreover, the plausibility contour based on \eqref{eq:plausonedim} is narrower than that based on \eqref{eq:twosided}, which indicates that the former is more efficient than the latter. For further comparison, the plausibility contour in \eqref{eq:plausonedim} with non-conformity measure as in \eqref{eq:mean} is also shown in Figure~\ref{fig:histogram}(b), in black. As expected, there is not much difference between the plausibility contours based on the mean- and median-based non-conformity measures. 


\begin{figure}[t]
\begin{center}
\subfigure[Histogram of $y^{50}$]{\scalebox{0.5}{\includegraphics{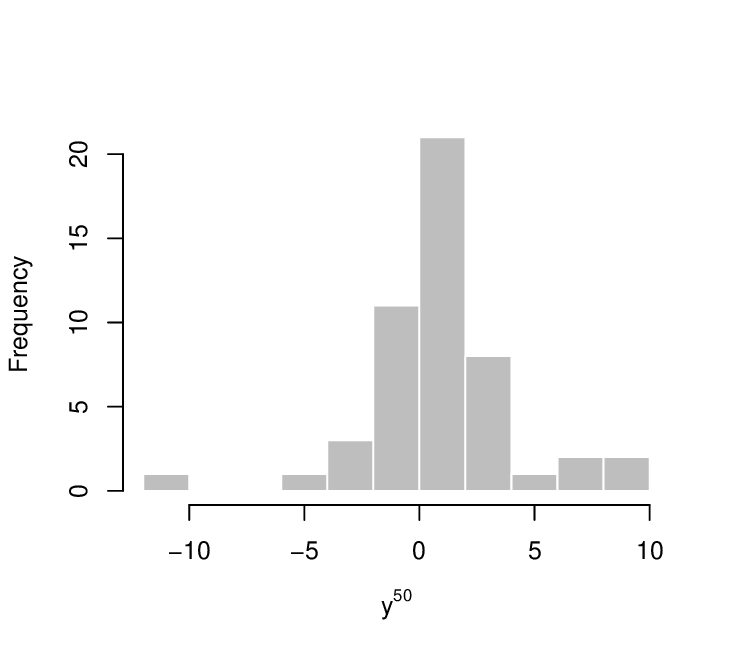}}}
\subfigure[Plausibility contour for $Y_{51}$]{\scalebox{0.5}{\includegraphics{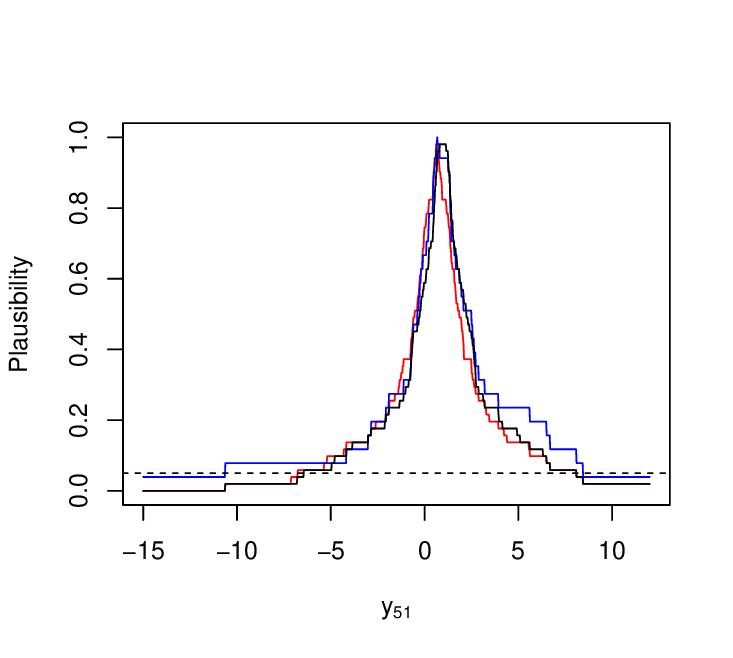}}}
\end{center}
\caption{Panel (a): Histogram of the observed data $y^{50}$. Panel (b): Plausibility contours in Equations~\eqref{eq:twosided}, \eqref{eq:plausonedim} and \eqref{eq:mean}, colored blue, red and black, respectively.}
\label{fig:histogram}
\end{figure}

For further illustration, we reconsider the simulation experiment described in Section~\ref{SS:weak}. That is, there are three different true data-generating distributions, namely, the normal distribution with mean zero and variance 0.5, the standard Cauchy distribution, and the standard skewed normal distribution \citep{azzalini} with skewness parameter equal to 1, with three different sample sizes: $n \in \{20, 30, 40\}$. For each of these combinations of data-generating distribution and sample size, we evaluate the coverage probability of 90\% nonparametric IM prediction intervals with non-conformity measure as in \eqref{eq:median}, based on 5000 Monte Carlo samples. The results are shown in Table~\ref{tab:coverage2}. As expected, the nonparametric IM achieves validity in the sense of \eqref{eq:weak} regardless of the data distribution and sample size.

\begin{table}[t]
\centering
\begin{tabular}{cc c c}
\hline
$n$ & Normal & Cauchy & Skew Normal \\
\hline
20 & 0.905 & 0.898 & 0.904  \\
30 & 0.897 & 0.902 & 0.901  \\
40 & 0.898 & 0.899 & 0.904\\
\hline
\end{tabular}
\caption{Estimated coverage probabilities of 90\% prediction intervals for $Y_{n+1}$, based on the nonparametric IM with non-conformity measure as in \eqref{eq:median}, with three different data-generating distributions.}
\label{tab:coverage2}
\end{table}

\subsection{Two-dimensional prediction}
\label{ss:depth}

An advantage of conformal prediction and, consequently, the nonparametric IM developed in Section~\ref{S:im}, is its flexibility to deal with multivariate responses. Algorithm~\ref{algo:conformal} can be followed to compute $\pi(y_{n+1};y^n)$, independent of the dimension. This is not the case for other nonparametric prediction methods. For example, the classical prediction interval methods depend on order statistics, and, therefore, cannot be generalized for multivariate responses since there is no natural ordering in this context \citep{Oja2013}.  

Computation of $\pi(y_{n+1};y^n)$ requires the specification of a non-conformity measure $\Psi$. For multivariate responses, there are a number of options, for example, \citet{leipredictionset} construct conformal prediction regions using a multivariate kernel density estimator.  As an alternative, here we use {\em data depth} \citep[e.g.,][]{liu1999} to construct a non-conformity measure and, in turn, a prediction plausibility contour as in \eqref{eq:plausonedim}.  

Roughly speaking, the concept of data depth amounts to assigning an appropriate ordering to the multivariate data, one with respect to a distance that measures how far away a point is from the center of a data cloud.  This ordering is determined by a depth function and, among the variety of depth functions that appear in the literature, one of the most widely used is the so-called {\em half-space depth} proposed by \citet{Tukey1975MathematicsAT}.  The half-space depth, $H(y \mid y^n)$, of a point $y$ relative to a data set $y^n$ is determined as the smallest fraction of data points contained in a closed half-space with boundary
through $y$. It ranges from 0---for the points that lie beyond the convex hull of the data---to its maximum value $\frac12$ attained at the Tukey median \citep{DYCKERHOFF201619}.  Computation of the half-space depth is discussed in \citet{CUESTAALBERTOS20084979} and can be carried out using the {\tt depth.halfspace} function in {\em ddalpha} R package \citep{depthR}.  Therefore, for multivariate prediction problems, this suggests the following non-conformity measure 
\begin{equation}
\label{eq:depth}
\psi_i(y^{n+1}) = \tfrac12 - H(y_i \mid y_{-i}^{n+1}), \quad i \in \I_{n+1},  
\end{equation}
where $H$ is the half-space depth function defined above.  

As an illustration, consider the bivariate data $y^{60} = (y_1,\ldots,y_{60})$, where each $y_i$ consists of the weight and engine displacement of one of 60 cars; these data are available in the R library {\em rpart} and are plotted in Figure~\ref{fig:biplaus}(a). The goal is to predict $Y_{61}$, the weight and engine displacement of the 61st car.  Figures~\ref{fig:biplaus}(b), \ref{fig:biplaus}(c) and \ref{fig:biplaus}(d) show, in different angles, the plausibility contour in \eqref{eq:plausonedim} based on the nonparametric IM with non-conformity measure in \eqref{eq:depth}. The shaded area in Figure \ref{fig:biplaus}(a) represents the 80\% prediction plausibility region for $Y_{61}$.

\begin{figure}[t]
\begin{center}
\subfigure[]{\scalebox{0.5}{\includegraphics{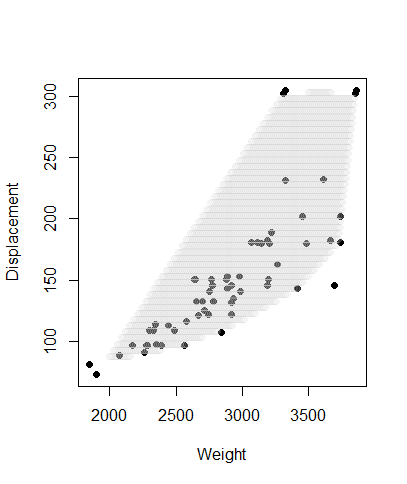}}}
\subfigure[]{\scalebox{0.6}{\includegraphics{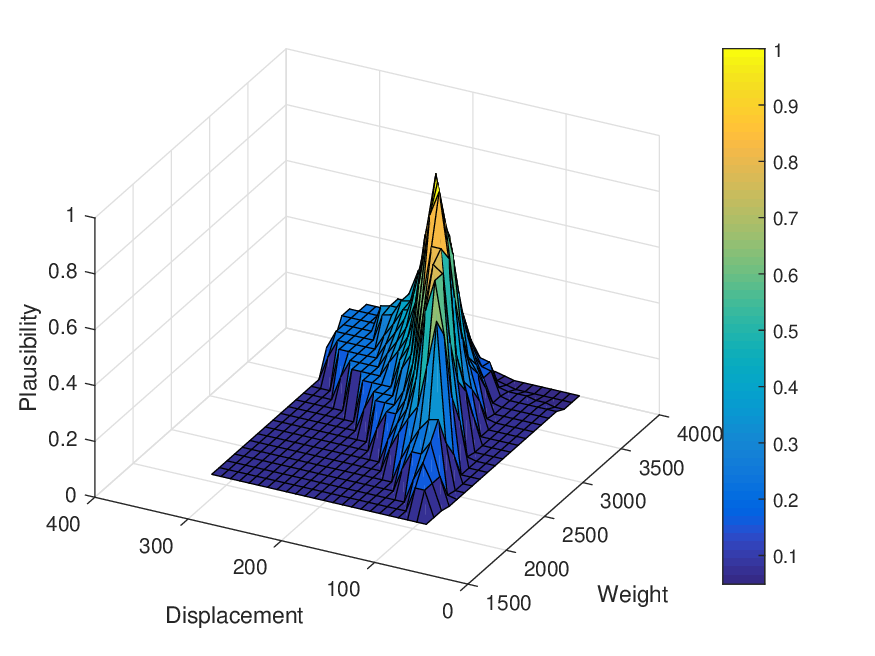}}}
\subfigure[]{\scalebox{0.5}{\includegraphics{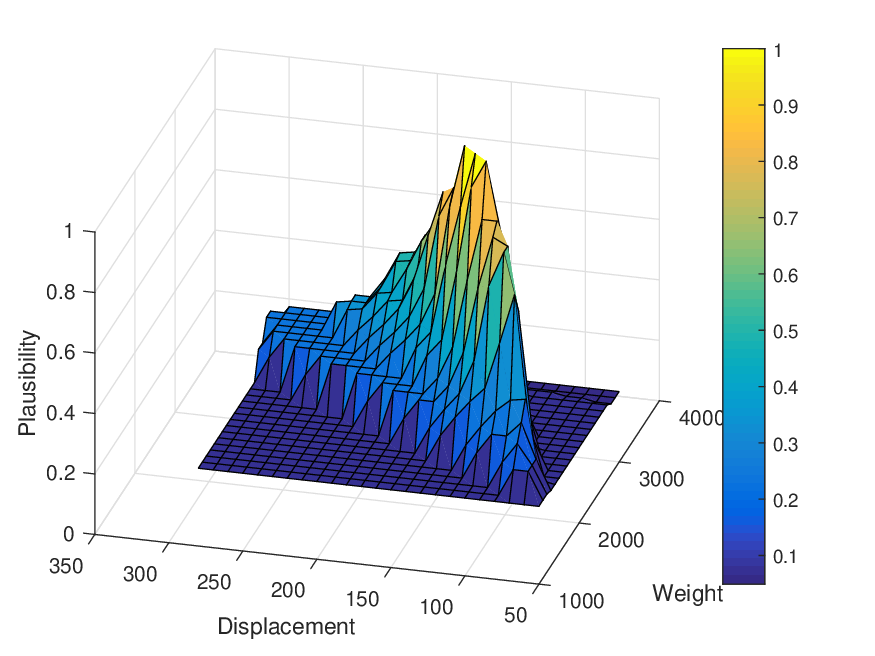}}}
\subfigure[]{\scalebox{0.5}{\includegraphics{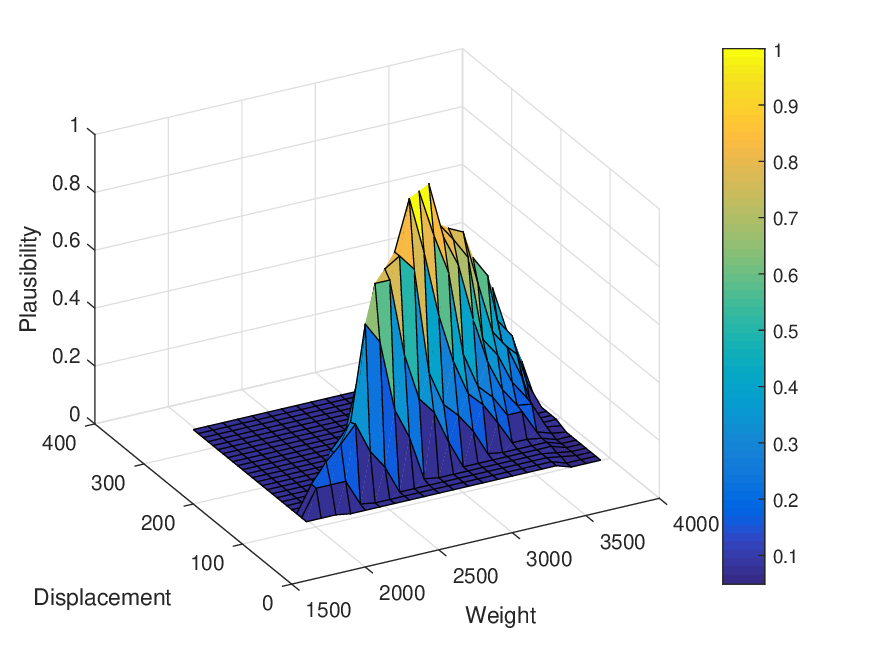}}}
\end{center}
\caption{Panel (a): Scatter plot of $y^{60}$ and the 80\% prediction plausibility region shaded gray.  Panels (b), (c), and (d): Plausibility contour in Equation~\eqref{eq:plausonedim} from different angles.}
\label{fig:biplaus}
\end{figure}

\section{Conclusion}
\label{S:discuss}

In this paper, we considered the fundamental problem of valid probabilistic prediction.  The literature on this topic tends to focus on solely on achieving prediction coverage probability guarantees, what we refer to as {\em Type-1 prediction validity} in Definition~\ref{def:weak}.  Here, however, we claim that, if there is reason to consider a ``predictive distribution'' as opposed to just prediction regions, then a different notion of validity---what we refer to as {\em Type-2 prediction validity} in Definition~\ref{def:strong}---is required.  We go on to show Type-1 and 2 prediction validity can be achieved by a class of imprecise probabilistic predictors, namely, the consonant plausibility functions whose contour corresponds to a conformal transducer.  To our knowledge, this connection between consonant plausibility functions and conformal prediction---two of Shafer's many significant contributions---through this notion of Type-2 validity is new.  In addition, we provide a novel characterization of conformal prediction through the IM framework, which makes the connection to consonant plausibility functions (via nested random sets) more direct and natural.  This connection also sheds light on the power, flexibility, and fundamental nature of the IM framework for statistical inference and prediction.  


Even before introducing our notion of Type-2 validity in Section~\ref{SS:strong}, we had doubts in Section~\ref{SS:weak} that a precise probabilistic predictor could achieve Type-1 validity.  This claim was based on a parallel result concerning consonance, coverage probability, and dominance in the imprecise probability literature.  Consider the simpler case where there is a single uncertain outcome, as opposed to a sequence of prediction problems where the goal is to quantify uncertainty about $Y_{n+1}$, given $Y^n=y^n$.  As summarized in Propostion~4.1 of \citet{destercke.dubois.2014} and the relevant discussion, given a collection $\{C_\alpha: \alpha \in [0,1]\}$ of nested sets, if we form the credal set consisting of all probability measures $P$ such that $P(C_\alpha) \geq 1-\alpha$, then its upper envelope is a consonant plausibility function and the sets $C_\alpha$ are the $\alpha$-cuts of the corresponding plausibility contour function; see, also, \citet{cuoso.etal.2001} and \citet{dubois.etal.2004}.  The setting just described is different from that of the present paper, but there are some obvious parallels that suggest a similar conclusion ought to hold here too.  Indeed, the sets $C_\alpha$ are like our prediction regions and ``$P(C_\alpha) \geq 1-\alpha$'' is like our desired prediction coverage probability in \eqref{eq:weak}.  We want this coverage property to hold for as many distributions as possible, hence we consider the corresponding credal set, and the result just described says that its upper envelope is a consonant plausibility function and, moreover, the sets $C_\alpha$ are the $\alpha$-cuts of the corresponding plausibility contour.  If this connection could be made rigorous, then it would follow that our ``conformal + consonance'' is the only way validity can be achieved.  Surely the conditional nature of the present prediction problem creates some unique challenges, so this makes for an interesting open question. 


The IM construction presented here is unique in the sense that it is not based on an association determined by a statistical model.  Instead, the {\em generalized association} is made through some very basic distributional properties resulting from the apparently innocuous assumption of exchangeability.  This begs the question if this ``model-free'' form of generalized association can be used in the context of statistical inference as opposed to prediction.  A first step would be the development of IM-based solutions to the classical nonparametric problems, e.g., inference on quantiles without distributional assumptions, moving on to more modern/complex problems.  Closely related to inference is that of assessing the quality or appropriateness of a posited statistical model, and here conformal-type methods have already proved to be useful \citep[e.g.,][]{wassermanregressionprediction, wassermanCommentSci}. 
That is, since a model is judged to be ``good'' only if it predicts the observed data reasonably well, methods that provide high-quality prediction can be leveraged to provide equally high-quality model assessment.  The work in the present paper provides a starting point for the construction of IM-based procedures for valid model assessment.  More generally, while IMs had previously focused on classical problems with a fully specified statistical model, 
the work presented in this paper reveals an opportunity for IMs to play a role in solving modern problems where valid inference/prediction is required with as few model assumptions as possible.

\section*{Acknowledgments}

The authors thank Vladimir Vovk, Jon Williams, and two anonymous reviewers, for their helpful feedback on an earlier version of this manuscript.  This work is partially supported by the U.S.~National Science Foundation, DMS--1811802.

\appendix

\section{Additional technical details}

\ifthenelse{1=1}{}{
\subsection{Support for the claim in Section~\ref{SS:strong}}
\label{S:conjecture}

Here we develop some more intuition about Type-2 validity and what it takes to satisfy \eqref{eq:strong} for the case where the $Y_i$'s take values in a finite set $\YY$.  

For a fixed $n$, if $|\YY|$ is the (finite) cardinality of the sample space $\YY$, then $R = |\YY|^n$ is the total number of possible configurations of the $n$-vector $y^n$.  Given an assertion $A$, we rank the different configurations of $y^n$ relative to the magnitude of $\uPi_{y^n}(A)$.  Write $y^n[r;A]$ for the configuration $y^n$ that is of rank $r$, relative to $A$, i.e., 
\[ \uPi_{y^n[r;A]}(A) > \uPi_{y^n[r+1;A]}(A), \quad \text{for all $r=1,\ldots,R$}. \]
For simplicity, we will assume that there are no ties.  The derivation is tedious, hence omitted, but it can be shown that the Type-2 validity condition \eqref{eq:strong} is equivalent to
\begin{equation}
\label{eq:joint}
\uPi_{y^n[r;A]}(A) \geq \sum_{k=r}^R \prob(Y_{n+1} \in A \, , \, Y^n = y^n[k;A]), \quad \text{for all $(r,n,A,\prob)$}.
\end{equation}
The ``for all $\alpha$'' part of the condition has been replaced by ``for all $r$'' here because there are only finitely many values.  Of course, the right-hand side of \eqref{eq:joint} can be rewritten in terms of conditional probabilities, leading to an equivalent form of Type-2 validity:
\[ \uPi_{y^n[r;A]}(A) \geq \sum_{k=r}^R \prob(Y_{n+1} \in A \mid Y^n = y^n[k;A]) \, \prob(Y^n = y^n[k;A]), \quad \text{all $(r,n,A,\prob)$}. \]
Notice that, if $r=1$ (top rank), then the sum in \eqref{eq:joint} is over all $y^n$, so the right-hand side reduces to $\prob(Y_{n+1} \in A)$.  Similarly, if $r=R$ (bottom rank), then \eqref{eq:joint} yields 
\[ \uPi_{y^n[R;A]}(A) \geq \prob(Y_{n+1} \in A \, , \, Y^n = y^n[R;A]), \quad \text{for all $(A,n,\prob)$}. \]
These more specific versions of \eqref{eq:strong} have inequalities that further support the ``dominance'' claim mentioned above, which suggest the probabilistic predictor cannot be a probability measure.  Unfortunately, the dependence on $A$-dependent ranks, etc., makes these inequalities difficult to interpret. To push this argument further, we will {\em assume} that $\uPi_{y^n} \equiv \Pi_{y^n}$ is a probability measure, which will allow us to simplify the above expressions and gain some further insights.  

In particular, if the probabilistic predictor is a probability measure $\Pi_{y^n}$, then we have additivity, $\Pi_{y^n}(A^c) = 1 - \Pi_{y^n}(A)$, for all $A$ and all $y^n$. This duality and the definition of the ranks above leads to the following key observation:
\[ \text{if $\Pi_{y^n}$ is a probability, then $y^n[r;A] = y^n[R-r+1; A^c]$ for all $r$ and all $A$}. \]
Since all the above inequalities have to hold for all assertions, we can take the bounds for both $A$ and $A^c$, and use the this key observation to pool the inequalities together.  Indeed, by considering the two extreme cases---$r=1$ and $r=R$---it can be shown that 
\begin{align*}
\prob(Y_{n+1} \in A, Y^n = y^n[R;A]) & \leq \min_{y^n} \Pi_{y^n}(A) \leq \prob(Y_{n+1} \in A) \\
\prob(Y_{n+1} \in A) & \leq \max_{y^n} \Pi_{y^n}(A) \leq \prob(Y_{n+1} \in A) + \prob(Y^n \neq y^n[1;A]), 
\end{align*}
again, for all $(A,n,\prob)$.  By considering intermediate values of $r$, we get further constraints on the values of the probabilistic predictor.  The critical point, however, is that these constraints depend on the true distribution $\prob$.  The probabilistic predictor generally does not have knowledge of the true distribution---if it did, then the right choice is for $\Pi_{y^n}$ to be the true conditional distribution, which satisfies all of the above constraints.  But without knowledge of $\prob$, then it would be impossible to arrange for the above expressions to hold for all $\prob$.  For example, if the data are coin flips, then the second inequality in the above display requires that we {\em assign} the probabilistic prediction of Heads to have $\Pi_{y^n}(\text{Heads})$ greater than the true marginal probability of Heads for at least one configuration $y^n$.  Since an assignment that ensures this for all $\prob$ would require knowledge of the true distribution, we have reason to doubt that there are probability distributions that can achieve Type-2 validity.  
}


\subsection{IMs for prediction in parametric models}
\label{S:impred.old}

Suppose that $(Y^n,Y_{n+1})$ are iid from $\prob_{Y|\theta}$. \citet{Martin2016PriorFreePP} showed that prediction can effectively be treated as a marginal inference problem, one where $\theta$ itself is a nuisance parameter.  Their arguments involve the dimension-reduction strategies in \citet{condmartin, marginalmartin}, but we omit these details here. 


\begin{astep} 
Write the joint association for $Y^n$ and $Y_{n+1}$ as
\begin{align}
\label{joint}
    T(Y^n) = a_{T}\left(V,\theta\right) \quad \text{and} \quad
    Y_{n+1} = a(U_{n+1},\theta), \quad (V, U_{n+1}) \sim \prob_{V,U_{n+1}},
\end{align}
where $a_T$ and $a$ are known functions, and $\prob_{V,U_{n+1}}$ is a known distribution. The left-most equation above represents a dimension-reduced association between data, parameter and auxiliary variable $V$, e.g., with $T(Y^n)$ a minimal sufficient statistic for $\theta$.  The key assumption is that the equation $T(Y^n) = a_T(V,\theta)$ can be solved for $\theta$; denote the solution by $\theta=\theta(T(Y^n),V)$.  Plugging this solution into the second equation gives 
\begin{equation}
\label{predassociation0}
Y_{n+1} = a\bigl( U_{n+1},\theta(T(Y^n),V) \bigr)
\end{equation}
as the marginal association for $Y_{n+1}$.  For fixed $Y^n$, the right-hand side is a function of random variables $(V, U_{n+1})$; write $G_{Y^n}$ for its distribution function.  Then \eqref{predassociation0} can be rewritten as $Y_{n+1} = G^{-1}_{Y^{n}}(W)$ with $W \sim \unif(0,1)$. 
 \end{astep}
 
\begin{pstep} 
Specify a valid random set $\S \sim \prob_\S$ that targets the unobservable auxiliary variable $W$ above, i.e., 
a random set whose contour function $\gamma_\S$ as in \eqref{eq:contourfunction} 
satisfies $\prob_W\{\gamma_\S(W) \leq \alpha\} \leq \alpha$ for all $\alpha \in [0,1]$.
\end{pstep}

\begin{cstep} 
Construct a data-dependent random set $G_{y^n}^{-1}(\S)$, the image of $\S$ under $G_{y^n}^{-1}$.  Then prediction of $Y_{n+1}$ is based on summaries of the distribution of $G_{y^n}^{-1}(\S)$. That is, if $A$ is some assertion about $Y_{n+1}$, then the plausibility of $A$, based on data $y^n$, is 
\begin{equation}
\label{plausprediction}
 \uPi_{y^n}(A) = \prob_{\S}\{G^{-1}_{y^n}(\S) \cap A \neq \varnothing\}.
\end{equation}
This can be evaluated for any $A$, resulting in a distribution-like summary of our uncertainty about $Y_{n+1}$, given $y^n$. 
As described in Section \ref{ss:cstep}, the contour 
\[ \pi(y_{n+1};y^n) = \prob_\S\{G_{y^n}^{-1}(\S) \ni \tilde y\}. \]
can be used to evaluate singleton assertions, i.e., $A=\{\tilde y\}$ for varying $\tilde y$, as well as generate plots that are useful for visualization; see Figures~\ref{fig:histogram}--\ref{fig:biplaus} above. Also, recall that when the random set $\S$ is nested, 
the plausibility is {\em consonant} 
and the function $A \mapsto\uPi_{y^n}(A)$ is completely determined by $\pi(\cdot;y^n)$ according to the rule 
\[ \uPi_{y^n}(A) = \sup_{\tilde y \in A} \pi(\tilde y;y^n). \]
 \end{cstep}

\citet{Martin2016PriorFreePP} show that, under certain conditions, the above construction leads to validity guarantees:
\begin{equation}
\label{eq:valid.parametric}
\sup_{\theta}\prob_{Y^{n+1}|\theta}\{\pi(Y_{n+1};Y^n)) \leq \alpha\}\leq \alpha, \quad \text{for all $\alpha \in [0,1]$}.
\end{equation}
That is, the contour value, $\pi(Y_{n+1};Y^n)$, as a function of the data $Y^{n+1}$ is stochastically no smaller than $\unif(0,1)$.  
This is desirable because we would be prone to errors in prediction if the actual $Y_{n+1}$ was determined to be relatively implausible based on data $Y^n$. If we define a $100(1-\alpha)$\% prediction plausibility region as in \eqref{eq:conformal.plint}, 
then \eqref{eq:valid.parametric} above ensures that the frequentist prediction coverage probability of this plausibility region is at least the nominal level, uniformly over $\theta$.

\subsection{Connections to existing literature}
\label{S:classical}

An important special case arises when the $Y_i$'s are scalar and, in the A-step of the nonparametric IM construction of Section~\ref{SSS:new.Astep}, one considers $T_i = Y_i$ in \eqref{eq:rule}, i.e.,
\begin{equation}
\label{eq:rule.special}
\psi_i(Y^{n+1}) = Y_i, \quad i \in \I_{n+1}.    
\end{equation}
The consequence is that the association in \eqref{eq:generalized.marginal} becomes
\[  Y_i = F^{-1}(U_i), \quad i \in \I_{n+1},  \]
where the $U_i$'s are exchangeable, marginally $\unif(0,1)$ and $F$ is the continuous distribution function that characterizes the common marginal distributions of the $Y_i$'s. As the dimension-reduced association in \eqref{eq:conformal.assoc} is now in terms of $r(Y_{n+1})$, the rank of $Y_{n+1}$, it can be rewritten as 
\begin{equation}
\label{eq:order}
Y_{n+1} \in [Y_{(V-1)}, Y_{(V)}] \quad V \sim \unif(\I_{n+1}),    
\end{equation}
where $Y_{(0)}$ and $Y_{(n+1)}$ are defined as the infimum and supremum of the support of the marginal distribution of $Y_1$, respectively.

Recall that the choice of the random set \eqref{eq:S} in the P-step of Section \ref{sss:p-step} was justified by the structure of the function $\psi_i$ as a non-conformity measure. There, we considered mappings $\psi_i$ such that smaller values of them are ``better'' in the sense that $y_i$ agrees with the prediction derived based on data $y_{-i}^{n+1}$. However, this is not necessarily the case for $\psi_i$ in \eqref{eq:rule.special}, as we are merely returning $y_i$ when comparing it to $y_{-i}^{n+1}$.  How to motivate a particular choice of random set $\S$?  \citet{Martin2016PriorFreePP} consider different classes of assertions/hypotheses about $Y_{n+1}$, and the structure of these assertions can inform the choice the random set.  For one-sided assertions, e.g., $\{Y_{n+1} \leq \tilde y\}$ or $\{Y_{n+1} \geq \tilde y\}$, for varying $\tilde y$, the natural choice of random set is similarly one-sided.
Here, however, we will focus on singleton assertions, $\{Y_{n+1} = \tilde y\}$, and the corresponding IM will produce two-sided prediction intervals.  Specifically, for the P-step, we recommend 
\begin{equation}\label{S3}
\S = \bigl\{\tfrac{n+2}{2} - |\tilde{V} - \tfrac{n+2}{2}|,\ldots,\tfrac{n+2}{2} + |\tilde{V} - \tfrac{n+2}{2}|\bigr\}, \quad \tilde V \sim \unif(\I_{n+1}), 
\end{equation}
which consists of a random number of points closest to the midpoint $\frac{n+2}{2}$.  This is the discrete version of what \cite{martinbook}, call the ``default'' random set.  Moreover, it is also easy to check that the random set $\S$ above is valid in the sense of \eqref{eq:S.valid}.

For the C-step, we combine $\S$ with the association \eqref{eq:order}, now represented as a collection of $v$-indexed sets
\begin{equation}
\label{eq:generalized.order}
\YY_{y^n}(v) = [y_{(v-1)}, y_{(v)}], 
\quad v \in \I_{n+1},
\end{equation}
to obtain the following new random set on the $Y_{n+1}$ space:
    \[ \YY_{y^n}(\S) =\{ y_{n+1}: y_{(\frac{n}{2} - |\tilde{V} - \frac{n+2}{2}|)} \leq y_{n+1} \leq y_{(\frac{n+2}{2} + |\tilde{V} - \frac{n+2}{2}|)}\}. \]
    The resulting plausibility contour is
\begin{align}\label{eq:twosided}
   \pi(y_{n+1};y^n) & = \prob_\S\bigl\{\YY_{y^n}(\S) \cap \{y_{n+1}\} \neq \varnothing \bigr\} \nonumber \\ 
   &= \prob_{\tilde{V}}\{y_{(\frac{n}{2} - |\tilde{V} - \frac{n+2}{2}|)} \leq y_{n+1} \leq y_{(\frac{n+2}{2} + |\tilde{V} - \frac{n+2}{2}|)}\} \nonumber \\ 
   &= \frac{1}{n+1} \sum_{v=1}^{n+1} 1_{y_{(\frac{n}{2} - |v - \frac{n+2}{2}|)} \leq y_{n+1} \leq y_{(\frac{n+2}{2} + |v - \frac{n+2}{2}|)}}.   
\end{align}

Using the same argument in the proof of Theorem~\ref{thm:valid}, it can be verified that the probabilistic predictor defined by the consonant plausibility contour in \eqref{eq:twosided} is Type-2 valid in the sense of Definition~\ref{def:strong}. In addition, the  prediction intervals derived from it are equal-tailed, always including the median of the observed data $y^n$; see Section~\ref{ss:one}, Figure~\ref{fig:histogram}(b). 
The reader may recognize that these prediction intervals are not new, they agree with the classical intervals based on order statistics where, for pre-specified integers $r$ and $s$ such that $1 \leq r < s \leq n$, the interval 
\begin{equation}\label{eq:classic}
[Y_{(r)},Y_{(s)}]
\end{equation}
is a $100\frac{s-r}{n+1}\%$ prediction interval \citep{wilks1941}.
It is satisfying to see that the IM framework, which has so far been focused on situations with a parametric statistical model, can be extended to cases without such a model and provide what would be considered a classical solution. Our analysis also sheds new light on that classical solution, in particular, it reveals that the latter has an (imprecise) ``probabilistic'' interpretation, beyond its more familiar interpretation as a frequentist procedure for constructing prediction intervals.  


Readers familiar with the imprecise probability literature are sure to recognize some connections here to the {\em nonparametric predictive inference} presented in, e.g., \citet{AUGUSTIN2004251} and elsewhere.  Based on what is referred to in this literature as {\em Hill's assumption} \citep{hill1968, HILL1993}, a pair of lower and upper probabilities for $Y_{n+1}$, given $Y^n = y^n$, are defined as, respectively, 
\begin{align*}
\underline{\prob}(Y_{n+1} \in B \mid y^n) & = \frac{1}{n+1} \sum_{v=1}^{n+1} 1_{\YY_{y^n}(v) \subseteq B} \\
\overline{\prob}(Y_{n+1} \in B \mid y^n) & = \frac{1}{n+1} \sum_{v=1}^{n+1}1_{\YY_{y^n}(v) \cap B \neq \varnothing}, 
\end{align*}
where $B$ is some generic set in the support of $Y_{n+1}$ and $\YY_{y^n}(v)$, for $v \in \I_{n+1}$, is as in \eqref{eq:generalized.order}.  
\citet{CoolenBayes} interprets these in a ``best of both worlds'' sense: on one hand, these are genuine post-data lower and upper probabilities and, therefore, inherit certain coherence properties \citep[e.g.,][]{walley1991}; on the other hand, through their connection to the underlying data-generating process through Hill's assumption, they inherit certain calibration properties \citep[e.g.,][]{LawlessandFredette2005}.  It turns out that these lower and upper probabilities can also be obtained from the above IM construction.  Indeed, if we take $\S = \{\tilde V\}$, for $\tilde V \sim \unif(\I_{n+1})$, then the induced distribution of the data-dependent random set $\YY_{y^n}(\S)$ generates the lower and upper probabilities defined above.  Although the aforementioned lower and upper probability approach is a special case of the IM construction presented above, there are some key differences between the two, resulting from our proposed random set $\S$ being nested and, therefore, structurally different from the non-nested singleton random sets that generate the $(\underline{\prob}, \overline{\prob})$ output.  In particular, there is no useful sense in which once can assess the ``plausibility'' that the next observation, $Y_{n+1}$, will be equal to some specified $y$: the lower probability equals 0 and the upper probability equals $(n+1)^{-1}$ for every $y$.  Compare this to the plausibility contour displayed in Figure~\ref{fig:histogram}(b), where there is one $y$ which is the ``most plausible'' value of $Y_{n+1}$ based on the observed data $y^n$.  Having this plausibility contour makes reading off prediction intervals with desired coverage level straightforward.  Of course, this same calibration property is embedded in the lower and upper probabilities above, but it is more difficult to extract when represented in that form.  Specifically, given a lower probability like that above, a nominal $100(1-\alpha)$\% prediction region can be obtained by finding the smallest $B$ such that $\underline{\prob}(Y_{n+1} \in B \mid y^n)$ exceeds $1-\alpha$, i.e., 
\[ \bigcap \bigl\{B \subseteq \RR: \underline{\prob}(Y_{n+1} \in B \mid y^n) \geq 1-\alpha \bigr\}. \]
In this case, with $\underline{\prob}$ as defined above, this calculation returns the classical prediction interval solution given in \eqref{eq:classic}. 

\bibliographystyle{apalike}
\bibliography{literature}

\end{document}